\documentclass[12pt]{amsart}
\usepackage{mathrsfs}
\usepackage{amsmath}
\usepackage{amsfonts}
\usepackage{amssymb}
\usepackage[all]{xy}           
\usepackage{bm}
\usepackage{bbding}
\usepackage{txfonts}
\usepackage{amscd}
\usepackage{xspace}
\usepackage[shortlabels]{enumitem}
\usepackage{ifpdf}

\ifpdf
  \usepackage[colorlinks,final,backref=page,hyperindex]{hyperref}
\else
  \usepackage[colorlinks,final,backref=page,hyperindex,hypertex]{hyperref}
\fi
\usepackage{tikz}
\usepackage[active]{srcltx}

\makeatletter

\newtheorem{thm}{Theorem}[section]
\newtheorem{lem}[thm]{Lemma}

\newtheorem{pro}[thm]{Proposition}
\newtheorem{ex}[thm]{Example}
\theoremstyle{definition}
\newtheorem{rmk}[thm]{Remark}
\newtheorem{defi}[thm]{Definition}

\newcommand{\nc}{\newcommand}
\newcommand{\delete}[1]{}

\nc{\mlabel}[1]{\label{#1}}  
\nc{\mcite}[1]{\cite{#1}}  
\nc{\mref}[1]{\ref{#1}}  
\nc{\mbibitem}[1]{\bibitem{#1}} 

\delete{
\nc{\mlabel}[1]{\label{#1}{\hfill \hspace{1cm}{\bf{{\ }\hfill(#1)}}}}
\nc{\mcite}[1]{\cite{#1}{{\em{{\ }(#1)}}}}  
\nc{\mref}[1]{\ref{#1}{{\em{{\ }(#1)}}}}  
\nc{\mbibitem}[1]{\bibitem[\em #1]{#1}} 
}

\newcommand {\emptycomment}[1]{}

\nc{\oprn}{\theta}
\nc{\Oprn}{\Theta}

\nc{\calo}{\mathcal{O}}
\nc{\oop}{$\mathcal{O}$-operator\xspace}
\nc{\oops}{$\mathcal{O}$-operators\xspace}
\nc{\mrho}{{\bm{\varrho}}}
\nc{\emk}{\mathbf{K}}
\nc{\invlim}{\displaystyle{\lim_{\longleftarrow}}\,}
\nc{\ot}{\otimes}

\newcommand{\be }{\begin{equation}}
\newcommand{\ee }{\end{equation}}

\newcommand{\g}{\mathfrak g}
\newcommand{\h}{\mathfrak h}

\newcommand{\huaG}{\mathcal{G}}

\newcommand{\huaX}{\mathcal{X}}
\newcommand{\huaY}{\mathcal{Y}}

\newcommand{\huaD}{\mathcal{D}}

\newcommand{\huaO}{{\mathcal{O}}}

\newcommand{\frkD}{\mathfrak D}

\newcommand{\frkL}{\mathfrak L}

\newcommand{\frkR}{\mathfrak R}

\newcommand{\half}{\frac{1}{2}}

\newcommand{\Courant}[1]{\left\llbracket  #1\right\rrbracket }

\newcommand{\Id}{{\rm{Id}}}

\newcommand{\br}[1]{   [ \cdot,    \cdot  ]   }

\newcommand{\Hom}{\mathrm{Hom}}

\newcommand{\gl}{\mathfrak {gl}}

\newcommand{\de}{\mathrm{d}}

\nc{\CV}{\mathbf{C}}

\begin{document}

\title{Cohomology for Deformation maps on quasi-twilled Lie triple systems}

\author{Jia Zhao}
\address{Jia Zhao, School of Mathematics and Statistics, Nantong University, Nantng 226019, Jiangsu, China}
\email{zhaojia@ntu.edu.cn}

\date{\today}

\begin{abstract}
Due to the importance of invariant, in the present paper, we provide a unified approach to study various kinds of operators on Lie triple systems such as crossed homomorphisms, relative Rota-Baxter operators, Reynolds operators etc. For this purpose, we examine two kinds of deformation maps on quasi-twilled Lie triple systems. A quasi-twilled Lie triple system is a Lie triple system which can be decomposed into two subspaces, one of which is a subalgebra. Moreover, we establish cohomologies of two kinds of deformation maps, which recover those of several kinds of operators on Lie triple systems.
\end{abstract}


\keywords{quasi-twilled Lie triple system, deformation map, cohomology}

\maketitle

\vspace{-1.1cm}

\tableofcontents

\allowdisplaybreaks

 \section{Introduction}
 Cartan's study on symmetric spaces \cite{Cartan} can be seen as the original study on Lie triple systems. Later Jacobson abstracted an algebraic object and named it as Lie triple systems \cite{Jacobson}, and then Hodge and Parshall defined its representation \cite{Hodge}. Lie triple systems play a crucial role in Lie theory, which is reflected in that they are related with other algebra structures closely. For instance, a Lie triple system is a special Nambu algebra, while the space of fundamental objects of Lie triple systems is endowed with a Leibniz algebra structure. Due to the importance of Lie triple systems, a structure theory of Lie triple systems was established by Lister \cite{Lister}. Moreover, applications of Lie triple systems on numerical analysis of differential equations were studied in \cite{Kaas} and that $T^*$-extension of Lie triple systems is compatible with nilpotency and solvability was examined in \cite{Deng}.

 As we all know, seeking for a invariant is a classical method to study a mathematical object. Among these invariants, cohomology has many applications in topology, geometry, and algebra. In application of algebra aspects, cohomology controls deformations and extensions. As for deformations, we refer that Gerstenhaber explored formal deformations on associative algebras \cite{Ge1,Ge2}, while Nijenhuis and Richardson examined deformations on Lie algebras \cite{NR1,NR2}. Study on deformations of other algebra structures and quadratic operads was deeply studied by Balavoine \cite{Bal}. Besides, study on cohomology and deformations of maps on special algebra is a complement to that of algebra structures. Recently, a series works on operators such as morphisms \cite{Bar,Bor,Fre1,Fre2,Fre3}, derivations \cite{TFS}, $\huaO$-operators (also called relative Rota-Baxter operators) \cite{HST,T.S2,TBGS,Uh1}, crossed homomorphisms \cite{JS,PSTZ}, twisted Rota-Baxter operators, and Reynolds operators \cite{Das1,Das2} on Lie algebras, Leibniz algebras, associative algebras, or even 3-Lie algebras were examined. Moreover, Jiang, Sheng, and Tang provided a unified approach to study cohomology of various operators on Lie algebras \cite{JST}.

 Since we introduced the notion of quasi-twilled Lie triple systems and explored relationship between relative Rota-Baxter operators and mathced pairs of Lie triple systems using the twisting of Lie triple systems in \cite{ZXQ}, motivated by Jiang, Sheng and Tang's work on Lie algebras, we provide a method to study cohomology of various operators on Lie triple systems. For this purpose, we first recall the notion of quasi-twilled Lie triple systems. A quasi-twilled Lie triple system is a Lie triple system which can be decomposed into two subspaces, one of which is a subalgebra. Different from the case of Lie algebras, as a Lie triple system structure is a ternary operation, the operation $\Theta$ of a twilled Lie triple system $(\huaG,\g_1,\g_2)$ should be written as $\Theta=\hat\phi_1+\hat\mu_1+\hat\psi+\hat\mu_2$, and it is more convenient to consider the case of strict ones, i.e. $\hat\psi=0$. Then we are ready to introduce two types of deformation maps. As we can see in the present paper, deformation map of type I on some special quasi-twilled Lie triple systems recovers crossed homomorphisms of any weight, derivations, and homomorphisms on Lie triple systems, while deformation map of type II on some quasi-twilled Lie triple systems recovers relative Rota-Baxter operators of any weight, relative Rota-Baxter operators, twisted Rota-Baxter operators, Reynolds operatoes on Lie triple systems and deformation map of matched pair of Lie triple systems. Besides, we also establish cohomologies of two types of deformation maps. Consequently, we obtain cohomologies of several operators on Lie triple systems. As was stated before, cohomology has a close connection with deformations in that the infinitesimal deformations of maps can be classified by the first cohomology groups. In order to build cohomologies of these deformation maps and use it to study deformations of several operators on Lie triple systems, we should thus construct the 0-cocycles of two kinds of deformation maps. We explored cohomologies and deformations of crossed homomorphisms and relative Rota-Baxter operators on Lie-Yamaguti algebras \cite{ZQ2,ZQX}, and one obtains those on Lie triple systems if the binary operation on Lie-Yamaguti algebras is restricted to zero. These two types of deformation  maps recovers some maps we have not studied before, and this work provides a unified method to study maps on higher structures.

 The paper is organized as follows. In Section 2, we first recall some basic concepts such as Lie triple systems, their representations, and cohomology of Lie triple systems. Then we recall the notion of quasi-twilled Lie triple systems and we give some special examples. In Section 3, we introduce the notion of deformation map of type I, and illustrate that crossed homomorphisms of any weight, derivations and homomorphisms are special cases of deformation map of type I on some special quasi-twilled Lie triple systems as was given in Section 2. Moreover, we build the cohomology of deformation map of type I so that we obtain that of crossed homomorphisms of any weight, derivations and homomorphisms on Lie triple systems. Parallel to Section 3, we introduce the notion of deformation maps of type II in Section 4. We also point out that relative Rota-Baxter operators (of any weight), twisted Rota-Baxter operators and Reynolds operators on Lie triple systems, and deformation map of matched pair of Lie triple systems are special cases of deformation maps of type II. Cohomology of deformation maps of type II is also established, and consequently we obtain that of relative Rota-Baxter operators (of any weight), twisted Rota-Baxter operators and Reynolds operators on Lie triple systems, and deformation map of matched pair of Lie triple systems.

 In this paper, all the vector spaces are over $\mathbb{K}$, a field of characteristic $0$.

\section{Preliminaries}
This section reviews some basic concepts and conclusions.

\subsection{Backgrounds}
This subsection reviews some concepts such as Lie triple systems, their representations, and cohomology.
\begin{defi}\cite{Jacobson}\label{LY}
A {\bf Lie triple system} is a vector space $\g$, together with a trilinear bracket $\Courant{\cdot,\cdot,\cdot}_\g:\wedge^2\g \otimes  \mathfrak{g} \to \mathfrak{g} $ such that the following equations are satisfied for all $x,y,z,w,t \in \g$,
\begin{eqnarray}
~ &&\label{LY1}\Courant{x,y,z}_\g+\Courant{y,z,x}_\g+\Courant{z,x,y}_\g=0,\\
~ &&\Courant{x,y,\Courant{z,w,t}_\g}_\g=\Courant{\Courant{x,y,z}_\g,w,t}_\g+\Courant{z,\Courant{x,y,w}_\g,t}_\g+\Courant{z,w,\Courant{x,y,t}_\g}_\g.\label{fundamental}
\end{eqnarray}
We denote a Lie triple system by a pair $(\g,\Courant{\cdot,\cdot,\cdot}_\g)$.
\end{defi}

Note that sometimes we write Eq. \eqref{LY1} as $\Courant{x,y,z}_\g+c.p.=0$ in the present paper.

\begin{defi}\cite{Yamaguti3}
Let $(\g,\Courant{\cdot,\cdot,\cdot}_\g)$ be a Lie triple system and $V$ a vector space. A {\bf representation} of $\g$ on $V$ consists of a bilinear map $\rho:\otimes^2 \g \to \gl(V)$ such that for all $x,y,z,w \in \g$,
\begin{eqnarray}
~&&\label{RYT4}\rho(z,w)\rho(x,y)-\rho(y,w)\rho(x,z)-\rho(x,\Courant{y,z,w}_\g)+D_\rho(y,z)\rho(x,w)=0,\\
~&&\label{RLY5}\rho(\Courant{x,y,z}_\g,w)+\rho(z,\Courant{x,y,w}_\g)=[D_\rho(x,y),\rho(z,w)],
\end{eqnarray}
where the bilinear map $D_\rho:\otimes^2\g \to \gl(V)$ is given by
\begin{eqnarray}\label{rep}
D_\rho(x,y):=\rho(y,x)-\rho(x,y), \quad \forall x,y \in \g.
\end{eqnarray}
It is obvious that $D_\rho$ is skew-symmetric. We denote a representation of $\g$ on $V$ by $(V;\rho)$. In order to distinguish from the deformation map of type I in the present paper, the subscript ``$\rho$" in $D_\rho$ will not be omitted in this paper.
\end{defi}
\emptycomment{
\begin{rmk}
By \eqref{RLY5} and \eqref{rep}, one can by a direct computation deduce that
\begin{eqnarray}
[D(x,y),D(z,w)]=D(\Courant{x,y,z},w)+D(z,\Courant{x,y,w}), \quad \forall x,y,z,w \in \g.\label{rep3}
\end{eqnarray}
\end{rmk}
}

\begin{ex}\label{ad}
Let $(\g,\Courant{\cdot,\cdot,\cdot}_\g)$ be a Lie triple system. For any $x,y\in \g$, we define $\frkR :\otimes^2\g \to \gl(\g)$ by $(x,y) \mapsto \frkR_{x,y}$ , where $\frkR_{x,y}z=\Courant{z,x,y}_\g$ for all $z \in \g$. Then $(\g;\frkR)$ forms a representation of $\g$ on itself, called the {\bf adjoint representation}. Moreover,  for all $x,y \in \g$, by Eq. \eqref{LY1}, $\frkL_{x,y}\triangleq D_\frkR(x,y)=\frkR_{y,x}-\frkR_{x,y}$ is given by,
\begin{eqnarray*}
\frkL_{x,y}z=\Courant{x,y,z}_\g, \quad \forall z \in \g.\label{lef}
\end{eqnarray*}
\end{ex}

In the sequel, let us recall the cohomology of Lie triple systems. Let $(\g,\Courant{\cdot,\cdot,\cdot}_\g)$ be a Lie triple system and $(V;\rho)$ a representation of $\g$ on $V$. The set of $n$-cochains is defined to be
\begin{eqnarray*}
C^{n}_{\mathsf{Ymg}}(\g;V):=\Hom(\otimes^{2n-1} \g,V)=\Hom\Big(\underbrace{(\ot^2\g)\ot\cdots\ot(\ot^2\g)}_{n-1}\ot\g,V\Big), \quad n \geq 1.
\end{eqnarray*}

If $n\geq 2,$ $f \in C^{n}_{\mathsf{Ymg}}(\g;V)$ satisfies
\begin{eqnarray*}
f(\huaX_1,\cdots ,\huaX_{n-2},x,x,y)=0,\\
f(\huaX_1,\cdots ,\huaX_{n-2},x,y,z)+f(\huaX_1,\cdots,\huaX_{n-2} ,y,z,x)+f(\huaX_1,\cdots,\huaX_{n-2}, z,x,y)=0,
\end{eqnarray*}
for all $x,y,z\in \g$ and $\huaX_i=x_i\ot y_i\in\ot^2\g$, where $1 \leq i \leq n-2$.

For any $f \in C^n_{\mathsf{Ymg}}(\g; V),~ (n\geq 1)$, the coboundary map $\de :C^n_{\mathsf{Ymg}}(\g;V)\to C^{n+1}_{\mathsf{Ymg}}(\g;V)$ is defined by
\begin{eqnarray*}
~ &&(\de f)(\huaX_1,\cdots,\huaX_{n},x)\\
~ &=&(-1)^{n+1}\Big(\rho(y_{n},x)f(\huaX_1,\cdots,\huaX_{n-1},x_{n})-\rho(x_{n},x)f(\huaX_1,\cdots,\huaX_{n-1},y_{n})\Big)\\
~ &&+\sum_{i=1}^{n} (-1)^{i+1}D_\rho(\huaX_i)f(\huaX_1,\cdots,\hat{\huaX_{i}},\cdots,\huaX_{n},x)\\
~ &&+\sum_{i=1}^{n} (-1)^{i}f(\huaX_1,\cdots,\hat{\huaX_{i}},\cdots,\huaX_{n+1},\Courant{x_i,y_i,x}_\g)\\
~ &&+\sum_{j<k} (-1)^j f(\huaX_1,\cdots,\hat{\huaX_{j}},\cdots,\huaX_{k-1},[\huaX_j,\huaX_k]_F,\huaX_{k+1},\cdots,\huaX_{n},x),
\end{eqnarray*}
for all $\huaX_i=x_i\ot y_i\in\ot^2\g,~i=1,2,\cdots,n$ and $x\in \g$. Here, the notation $[\cdot,\cdot]_F$ is defined to be
$$[\huaX,\huaY]_F:=\Courant{x_1,x_2,y_1}_\g\ot y_2+y_1\ot\Courant{x_1,x_2,y_2}_\g,$$
for any $\huaX=x_1\ot x_2$ and $\huaY=y_1\ot y_2$. Yamaguti showed that $\de^2=0$ and thus $(C^*_{\mathsf{Ymg}}(\g;V):=\oplus_{n=1}^{\infty}C^n_{\mathsf{Ymg}}(\g;V),\de)$ is a cochain complex, whose cohomology is called the {\bf Yamaguti cohomology} \cite{Yamaguti3}.

\vspace{3mm}
Let $\g$ be a vector space and $C^*_{\mathsf{N}}(\g,\g)=\oplus_{p\geqslant0}C^p_{\mathsf{N}}(\g,\g)$, where $C^p_{\mathsf{N}}(\g,\g)=\Hom(\otimes^{2p+1}\g,\g)$ and degrees of elements in $C^p_{\mathsf{N}}(\g,\g)$ are assumed to be $p$. For all $P\in C^p_{\mathsf{N}}(\g,\g)$ and $Q\in C^q_{\mathsf{N}}(\g,\g)$, define a graded bracket (called the {\bf Nambu bracket}) $[\cdot,\cdot]_{\mathsf{N}}$
to be
$$[P,Q]_{\mathsf{N}}=P\circ Q-(-1)^{pq}Q\circ P,$$
where $P\circ Q\in C^{p+q}_{\mathsf{N}}(\g,\g)$ is defined by
{\footnotesize
\begin{eqnarray}
~ &&\nonumber P\circ Q(\huaX_1,\cdots,\huaX_{p+q},x)\\
~ \nonumber&=&\sum_{k=1}^p (-1)^{(k-1)q}\sum_{\sigma\in \mathbb{S}(k-1,q)}(-1)^\sigma P\Big(\huaX_{\sigma(1)},\cdots,\huaX_{\sigma(k-1)},Q\big(\huaX_{\sigma(k)},\cdots,\huaX_{\sigma(k+q-1)},x_{k+q}\big)\otimes y_{k+q}, \huaX_{k+q+1},\cdots,\huaX_{p+q},x\Big)\\
~ &&+\sum_{k=1}^p (-1)^{(k-1)q}\sum_{\sigma\in \mathbb{S}(k-1,q)}(-1)^\sigma P\Big(\huaX_{\sigma(1)},\cdots,\huaX_{\sigma(k-1)},x_{k+q}\otimes Q\big(\huaX_{\sigma(k)},\cdots,\huaX_{\sigma(k+q-1)},y_{k+q}\big), \huaX_{k+q+1},\cdots,\huaX_{p+q},x\Big)\label{control}\\
~ &&\nonumber+\sum_{\sigma\in \mathbb{S}(p,q)}(-1)^\sigma P\Big(\huaX_{\sigma(1)},\cdots,\huaX_{\sigma(p)},Q\big(\huaX_{\sigma(p+1)},\cdots,\huaX_{\sigma(p+q)},x\big)\Big),
\end{eqnarray}}
for all $\huaX_i\in \otimes^2\g,~i=1,2,\cdots,p+q$ and $x\in \g$.

Then $(C^*_{\mathsf{N}}(\g,\g),[\cdot,\cdot]_{\mathsf{N}})$ is a graded Lie algebra and its Maurer-Cartan elements corresponds to Nambu algebra structures \cite{Rot}. Let $C^*_{\mathsf{LTS}}(\g,\g)=\oplus_{p\geqslant0}C_{\mathsf{LTS}}^p(\g,\g)=\oplus_{p\geqslant0}\Hom_{\mathsf{LTS}}^p(\ot^{2p+1}\g,\g)$ be a graded subspace of $C^*_{\mathsf{N}}(\g,\g)$ such that for any $P\in C^p_{\mathsf{LTS}}(\g,\g)$, $P$ satisfies
\begin{eqnarray*}
P(\huaX_1,\cdots,\huaX_{p-1},x,x,y)&=&0,\\
~P(\huaX_1,\cdots,\huaX_{p-1},x,y,z)+P(\huaX_1,\cdots,\huaX_{p-1},y,z,x)+P(\huaX_1,\cdots,\huaX_{p-1},z,x,y)&=&0.
\end{eqnarray*}
The corresponding Nambu bracket is denoted by $[\cdot,\cdot]_{\mathsf{LTS}}$, when the graded vector space is restricted  to $C^*_{\mathsf{LTS}}(\g,\g)=\oplus_{p\geqslant0}C_{\mathsf{LTS}}^p(\g,\g)$. Then $(C^*_{\mathsf{LTS}}(\g,\g)=\oplus_{p\geqslant0}C_{\mathsf{LTS}}^p(\g,\g),[\cdot,\cdot]_{\mathsf{LTS}})$ is a graded subalgebra and its Maurer-Cartan elements corresponds to Lie triple system structures \cite{XST}. In the present paper, we always consider the graded Lie algebra $(C^*_{\mathsf{LTS}}(\g,\g),[\cdot,\cdot]_{\mathsf{LTS}})$.

Consider a Lie triple system $(\g,\Courant{\cdot,\cdot,\cdot}_\g)$ and set $\pi(x,y,z)=\Courant{x,y,z}_\g$. Let $\de:C^n_{\mathsf{Ymg}}(\g;\g)\longrightarrow C^{n+1}_{\mathsf{Ymg}}(\g;\g)$ be the coboundary map associated to the adjoint representation. Then we have the following
\begin{pro}(\cite{XST})
For any $f\in C^n_{\mathsf{Ymg}}(\g,\g)$, we have
$$\de (f)=(-1)^{n-1}[\pi,f]_{\mathsf{LTS}}.$$
\end{pro}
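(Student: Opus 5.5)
The plan is a direct comparison of both sides on a general argument. Take $f\in C^n_{\mathsf{Ymg}}(\g;\g)=\Hom(\ot^{2n-1}\g,\g)$; as an element of $C^*_{\mathsf{LTS}}(\g,\g)$ it has degree $p=n-1$, and $\pi$ has degree $1$. Then
$$[\pi,f]_{\mathsf{LTS}}=\pi\circ f-(-1)^{n-1}f\circ\pi .$$
I will expand each composition with formula \eqref{control} on an argument $(\huaX_1,\cdots,\huaX_n,x)$, and then match the resulting terms, one group at a time, with the four groups in the definition of $\de f$ for the adjoint representation. In that representation $\rho=\frkR$, so $\rho(a,b)u=\Courant{u,a,b}_\g$, and $D_\rho=\frkL$, so $D_\rho(a,b)u=\Courant{a,b,u}_\g$.

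\emph{Step 1: $\pi\circ f$.} Here $\pi$ has degree $1$, so $k=1$ is the only value in the first two sums, and the shuffles are $\mathbb{S}(0,n-1)$, which is trivial. The first two sums give
$$\Courant{f(\huaX_1,\cdots,\huaX_{n-1},x_n),y_n,x}_\g+\Courant{x_n,f(\huaX_1,\cdots,\huaX_{n-1},y_n),x}_\g .$$
By skew-symmetry of the bracket in its first two slots, these become $-\rho(y_n,x)f(\cdots,x_n)+\rho(x_n,x)f(\cdots,y_n)$, which is the first line of $\de f$ up to the overall sign. The third sum runs over $\mathbb{S}(1,n-1)$, i.e.\ over which $\huaX_i$ is placed first. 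It gives $\sum_i(-1)^{i-1}\Courant{x_i,y_i,f(\huaX_1,\cdots,\hat\huaX_i,\cdots,\huaX_n,x)}_\g$, which is the $D_\rho$ sum.

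\emph{Step 2: $f\circ\pi$.} Now $f$ has degree $n-1$ and $q=1$. The third sum uses $\sigma\in\mathbb{S}(n-1,1)$, which picks the $\huaX_i$ that goes into $\pi(\huaX_i,x)=\Courant{x_i,y_i,x}_\g$; this yields the sum involving $\Courant{x_i,y_i,x}_\g$. The first two sums, over $k$ and $\sigma\in\mathbb{S}(k-1,1)$, insert $\pi(\huaX_j,\cdot)$ into the two tensor factors of $\huaX_k$. Together they produce exactly $[\huaX_j,\huaX_k]_F$ in position $k$, with $j<k$. So $f\circ\pi$ accounts for the last two lines of $\de f$.

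\emph{Step 3: signs.} Multiplying by $(-1)^{n-1}$, I will check that every group carries the sign it has in $\de f$. In $(-1)^{n-1}\pi\circ f$, the first line gets $(-1)^{n-1}\cdot(-1)=(-1)^n=(-1)^{n+1}\cdot(-1)^{\ldots}$ after reconciling with the stated $(-1)^{n+1}$, and the $D_\rho$ term gets $(-1)^{n-1}(-1)^{i-1}$. In $-f\circ\pi$, the shuffle signs $(-1)^{(k-1)}(-1)^\sigma$ must reduce to $(-1)^j$ and $(-1)^i$.

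This sign bookkeeping is the main obstacle. There are two sources of trouble. First, the sign $(-1)^\sigma$ of the shuffle that moves $\huaX_j$ past $\huaX_1,\ldots,\huaX_{j-1}$ (or moves $\huaX_i$ to the end) has to be tracked carefully. Second, the first-line terms in the two conventions differ by a transposition inside the bracket. If a global discrepancy of $(-1)$ shows up, I would recheck it on $n=1$ and $n=2$ against Yamaguti's formula and fix the convention there, since the structure of the matching is forced.
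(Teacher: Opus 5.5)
Your strategy of expanding $\pi\circ f$ and $f\circ\pi$ by \eqref{control} and matching the four groups of $\de f$ is the natural one. The paper itself gives no argument and only cites the result. Your assignment in Steps 1--2 of which sum produces which group is also correct. The gap is the sign bookkeeping, which is the whole content of the statement: you leave it undone, and what you do state is partly wrong.

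In Step 1 no skew-symmetry is involved. By definition of $\frkR$, $\Courant{f(\cdots,x_n),y_n,x}_\g=\rho(y_n,x)f(\cdots,x_n)$ and $\Courant{x_n,f(\cdots,y_n),x}_\g=-\rho(x_n,x)f(\cdots,y_n)$. So the first two sums give $+\big(\rho(y_n,x)f(\cdots,x_n)-\rho(x_n,x)f(\cdots,y_n)\big)$, and the factor $(-1)^{n-1}=(-1)^{n+1}$ matches the first line of $\de f$ exactly. The extra $(-1)$ you carry into Step 3 is spurious. The $[\cdot,\cdot]_F$-group is also fine, since $-(-1)^{k-1}(-1)^{k-j}=(-1)^j$.

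The real problem is the two groups coming from the last sum of \eqref{control}. Taken literally, that sum carries no sign besides $(-1)^\sigma$.
\begin{itemize}
\item The $D_\rho$-group in $(-1)^{n-1}\pi\circ f$ then carries $(-1)^{n-1}(-1)^{i-1}=(-1)^{n+i}$ (your own expression) instead of $(-1)^{i+1}$.
\item The group $f(\huaX_1,\cdots,\hat{\huaX_i},\cdots,\huaX_n,\Courant{x_i,y_i,x}_\g)$ in $-f\circ\pi$ carries $-(-1)^{n-i}$ instead of $(-1)^i$. Here $(-1)^{n-i}$ is the sign of the shuffle in $\mathbb S(n-1,1)$ putting $\huaX_i$ last.
\end{itemize}
Both groups are off by $(-1)^{n+1}$, while the other two groups are not. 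So for even $n$ the identity fails as printed; a direct check at $n=2$ shows it. Because this is a \emph{relative} sign between groups, your fallback of correcting a global $(-1)$ against Yamaguti's formula cannot repair it.

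The missing ingredient is the convention that the last sum of \eqref{control} carries the factor $(-1)^{pq}$. This is the $k=p+1$ instance of the factor $(-1)^{(k-1)q}$ in the first two sums. In both compositions $pq=n-1$, which supplies exactly the missing $(-1)^{n-1}$, and then all four groups agree. The test $[\pi,\pi]_{\mathsf{LTS}}=2\,\pi\circ\pi$ confirms this is the right convention. With the factor $(-1)^{pq}=-1$ on the last sum, $\pi\circ\pi=0$ is exactly \eqref{fundamental}. Without it, one gets an identity that, combined with \eqref{fundamental}, forces $\Courant{x_1,y_1,\Courant{x_2,y_2,x}_\g}_\g=\Courant{x_2,y_2,\Courant{x_1,y_1,x}_\g}_\g$, which fails in general. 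So you must state and justify this convention explicitly, and then actually carry out the shuffle-sign count for all four groups.
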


\subsection{Quasi-twilled Lie triple systems}
In this section, let us first recall some notions in \cite{ZXQ}. Let $\g_1$ and $\g_2$ be vector spaces, and elements in $\g_1$ are denoted by $x,y,z,x_i$, and those in $\g_2$ by $u,v,w,u_i$. We denote by $\g^{l,k}$ the direct sum of all $(l+k)$-tensor power of $\g_1$ and $\g_2$: $\otimes^{n}(\g_1\oplus\g_2)$, where $n=l+k$, and $l$ (resp. $k$) means the quantities of $\g_1$ (resp. $\g_2$). For example, $\g^{2,1}\subset \ot^3(\g_1\oplus\g_2)$ can be written as
$$\g^{2,1}=(\g_1\ot\g_1\ot\g_2)\oplus(\g_1\ot\g_2\ot\g_1)\oplus(\g_2\ot\g_1\ot\g_1).$$
Then $\displaystyle{\otimes^{n}(\g_1\oplus\g_2)=\oplus_{l+k=n}\g^{l,k}}$. For example,
$$\ot^3(\g_1\oplus\g_2)=\g^{3,0}\oplus\g^{2,1}\oplus\g^{1,2}\oplus\g^{0,3}.$$
By $\Hom$-functor, we have
\begin{eqnarray}
C^p_{\mathsf{LTS}}(\g_1\oplus\g_2,\g_1\oplus\g_2)=\bigoplus_{l+k=2p}C^p_{\mathsf{LTS}}(\g^{l,k},\g_1)\oplus\bigoplus_{l+k=2p}C^p_{\mathsf{LTS}}(\g^{l,k},\g_2).\label{decompose}
\end{eqnarray}
For a linear map $f\in C^p_{\mathsf{LTS}}(\g^{l,k},\g_1)$ (resp. $f\in C^p_{\mathsf{LTS}}(\g^{l,k},\g_2)$), $f$ naturally induces a linear map $\hat{f}\in C^p_{\mathsf{LTS}}(\g_1\oplus\g_2,\g_1\oplus\g_2)$ (called the {\bf lift} of $f$) defined to be
\begin{eqnarray*}
\hat f:=
\begin{cases}
f,\quad \text{on }~~ \g^{l,k},\\
0,\quad \text{all other cases.}
\end{cases}
\end{eqnarray*}

\begin{defi}\cite{ZXQ}
Let $(\huaG,\Courant{\cdot,\cdot,\cdot}_{\huaG})$ be a Lie triple system with a decomposition of two subspaces: $\huaG=\g_1\oplus\g_2$. We call the triple $(\huaG,\g_1,\g_2)$ a {\bf quasi-twilled Lie triple system} if $\g_2$ is a subalgebra.
\end{defi}

The operation $\Courant{(x,u),(y,v),(z,w)}_\huaG$ of $\huaG$ is uniquely decomposed into 12 multiplications by the canonical projections $\huaG\longrightarrow\g_1$ and $\huaG\longrightarrow\g_2$:
\begin{eqnarray*}
\Courant{x,y,z}_\huaG&=&(\Courant{x,y,z}_1,\Courant{x,y,z}_2), \quad \Courant{x,y,w}_\huaG=(\Courant{x,y,w}_1,\Courant{x,y,w}_2),\\
~\Courant{x,v,z}_\huaG&=&(\Courant{x,v,z}_1,\Courant{x,v,z}_2), \quad \Courant{x,v,w}_\huaG=(\Courant{x,v,w}_1,\Courant{x,v,w}_2),\\
~\Courant{u,v,z}_\huaG&=&(\Courant{u,v,z}_1,\Courant{u,v,z}_2),  \quad \Courant{u,v,w}_\huaG=(\Courant{u,v,w}_1,\Courant{u,v,z}_2).
\end{eqnarray*}
Here the operation $\Courant{\cdot,\cdot,\cdot}_1$ (resp. $\Courant{\cdot,\cdot,\cdot}_2$) denotes the projection of $\huaG$ onto $\g_1$ (resp. $\g_2$).

Let $(\huaG,\g_1,\g_2)$ be a quasi-twilled Lie triple system. We use the notation $\Theta$ to denote the operation $\Courant{\cdot,\cdot,\cdot}_\huaG$ on $\huaG$, i.e.,
$$\Theta((x,u),(y,v),(z,w))=\Courant{(x,u),(y,v),(z,w)}_\huaG.$$

In general, Let the triple $(\huaG,\g_1,\g_2)$ be a quasi-twilled Lie triple system. Write the structure as $\Theta=\hat\phi_1+\hat\mu_1+\hat\psi+\hat\mu_2$, then $\hat\phi_1,~\hat\mu_1,~\hat\psi$ and $\hat\mu_2$ have forms as follows
{\footnotesize
\begin{eqnarray}\label{equia}
\left\{\begin{array}{rcl}
~~\hat\phi_1((x,u),(y,v),(z,w))&=&(0,\Courant{x,y,z}_2),\\
~~\hat\mu_1((x,u),(y,v),(z,w))&=&(\Courant{x,y,z}_1,\Courant{x,y,w}_2+\Courant{u,y,z}_2-\Courant{v,x,z}_2),\\
~~\hat\psi((x,u),(y,v),(z,w))&=&(\Courant{x,y,w}_1+\Courant{u,y,z}_1-\Courant{v,x,z}_1,\Courant{u,v,z}_2+\Courant{x,v,w}_2-\Courant{y,u,w}_2),\\
~~\hat\mu_2((x,u),(y,v),(z,w))&=&(\Courant{u,v,z}_1+\Courant{x,v,w}_1-\Courant{y,u,w}_1,\Courant{u,v,w}_2).
\end{array}\right.
\end{eqnarray}}
It is easy to see that $\hat\phi_1$ is the lift of $\phi_1:\g_1\longrightarrow\g_2$, where $\phi_1=\Courant{\cdot,\cdot,\cdot}_2$. If $\hat\phi_1=0$, we call the triple $(\huaG,\g_1,\g_2  )$ a {\bf twilled Lie triple system}. If $\hat\psi=0$, then we call the triple $(\huaG,\g_1,\g_2  )$ a {\bf strict quasi-twilled Lie triple system}. If moreover, $\hat\phi_1=0$ and $\hat\psi=0$, then we call the triple $(\huaG,\g_1,\g_2  )$ a {\bf strict twilled Lie triple system}, in which case, Lie triple systems $(\g_1,\Courant{\cdot,\cdot,\cdot}_1)$ and $(\g_2,\Courant{\cdot,\cdot,\cdot}_2)$ form a matched pair of Lie triple systems \cite{ZXQ}. In this paper, we focus on the strict case, and we always omit the premix ``strict" without ambiguities.
\emptycomment{
In fact, $[\Theta,\Theta]_{\mathsf{LTS}}=0$ is equivalent to the following conditions:
\begin{eqnarray*}
\left\{\begin{array}{rcl}
[\hat\phi_1,\hat\mu_1]_{\mathsf{LTS}}&=&0,\\
~[\hat\psi,\hat\phi_1]_{\mathsf{LTS}}+\half[\hat\mu_1,\hat\mu_1]_{\mathsf{LTS}}&=&0,\\
~[\hat\phi_1,\hat\mu_2]_{\mathsf{LTS}}+[\hat\psi,\hat\mu_1]_{\mathsf{LTS}}&=&0,\\
~[\hat\mu_1,\hat\mu_2]_{\mathsf{LTS}}+\half[\hat\psi,\hat\psi]_{\mathsf{LTS}}&=&0,\\
~[\hat\psi,\hat\mu_2]_{\mathsf{LTS}}&=&0,\\
~\half[\hat\mu_2,\hat\mu_2]_{\mathsf{LTS}}&=&0.
\end{array}\right.
\end{eqnarray*}}

In the sequel, we give some examples of quasi-twilled Lie triple systems.

\begin{defi}
Let $(\g,\Courant{\cdot,\cdot,\cdot}_\g)$ and $(\h,\Courant{\cdot,\cdot,\cdot}_\h)$ be two Lie triple systems. If $(\h;\rho)$ is a representation of $\g$ and the linear map $\rho:\ot^2\g\longrightarrow\gl(\h)$ satisfies the following conditions
\begin{eqnarray*}
\rho(x,y)u&\in& C(\h),\\
\rho(x,y)\Courant{u,v,w}_\h&=&0,\quad \forall x,y\in \g,~u,v,w\in \h,
\end{eqnarray*}
then we call $\rho$ an {\bf action} of $\g$ on $\h$, where $C(\h)$ means the center of $\h$:
$$C(\h):=\{u\in \h:\Courant{u,v,w}=0,\forall v,w\in \h\}\cap\{u\in \h:\Courant{v,w,u}=0,\forall v,w\in \h\}.$$
\end{defi}

\begin{ex}\label{crossedd}
Let $(\g,\Courant{\cdot,\cdot,\cdot}_\g)$ and $(\h,\Courant{\cdot,\cdot,\cdot}_\h)$ be two Lie triple systems, and $\rho:\ot^2\g\longrightarrow\gl(\h)$ an action of $\g$ on $\h$. For any $\lambda\in \mathbb K$ and for all $x,y,z\in \g,~u,v,w\in \h$, define an operation $\Courant{\cdot,\cdot,\cdot}_\rho$ on $\g\oplus\h$ to be
$$\Courant{(x,u),(y,v),(z,w)}_\rho:=(\Courant{x,y,z}_\g,D_\rho(x,y)w+\rho(y,z)u-\rho(x,z)v+\lambda\Courant{u,v,w}_\h).$$
Then $(\g\oplus\h,\Courant{\cdot,\cdot,\cdot}_\rho)$ is a Lie triple system, called the {\bf action Lie triple system}, which is denoted by $\g\ltimes_\rho\h$. Consequently, $(\g\ltimes_\rho\h,\g,\h)$ becomes a (quasi-)twilled Lie triple system, where $\Courant{\cdot,\cdot,\cdot}_\g:=\Courant{\cdot,\cdot,\cdot}_1$, $\rho(x,y)v:=\Courant{v,x,y}_2$ and $\lambda\Courant{\cdot,\cdot,\cdot}_\h:=\Courant{\cdot,\cdot,\cdot}_2$.
\end{ex}

\begin{ex}\label{semidirectt}
Let $(\g,\Courant{\cdot,\cdot,\cdot}_\g)$ be a Lie triple system, and $(V;\rho)$ a representation of $(\g,\Courant{\cdot,\cdot,\cdot}_\g)$. Then the {\bf semidirect product Lie triple system} $\g \ltimes_{\rho} V$, whose operation $\Courant{\cdot,\cdot,\cdot}_{\rho}$ on $\g \oplus V$ is defined to be for all $x,y,z \in \g, ~u,v,w \in V$,
\begin{eqnarray*}
~\Courant{(x,u),(y,v),(z,w)}_{\rho}=(\Courant{x,y,z}_\g,D_\rho(x,y)w+\rho(y,z)u-\rho(x,z)v),
\end{eqnarray*}
forms a quasi-twilled Lie triple system $(\g\ltimes_\rho V,\g,V)$. In this case, $\Courant{\cdot,\cdot,\cdot}_\g=\Courant{\cdot,\cdot,\cdot}_1$ and $\rho(x,y)v:=\Courant{v,x,y}_2$.
\end{ex}

\begin{ex}\label{directt}
Let $(\g,\Courant{\cdot,\cdot,\cdot}_\g)$ and $(\h,\Courant{\cdot,\cdot,\cdot}_\h)$ be two Lie triple systems. Then the direct product Lie triple system $(\g\oplus\h,\Courant{\cdot,\cdot,\cdot}_\oplus)$, where the operation $\Courant{\cdot,\cdot,\cdot}_{\g\oplus\h}$ is defined to be for all $x,y,z\in \g,~u,v,w\in \h$
$$\Courant{(x,u),(y,v),(z,w)}_{\g\oplus\h}:=(\Courant{x,y,z}_\g,\Courant{u,v,w}_\h),$$
forms a quasi-twilled Lie triple system $(\g\oplus\h,\g,\h)$. In this case, $\Courant{\cdot,\cdot,\cdot}_\g:=\Courant{\cdot,\cdot,\cdot}_1$ and $\Courant{\cdot,\cdot,\cdot}_\h:=\Courant{\cdot,\cdot,\cdot}_2$.
\end{ex}

\begin{ex}\label{twisted}
Let $(\g,\Courant{\cdot,\cdot,\cdot}_\g)$ be a Lie triple system, $(V;\rho)$ a representation of $(\g,\Courant{\cdot,\cdot,\cdot})$, and $\omega\in \Hom(\wedge^2\g\ot\g,V)$ a $1$-cocycle. Then $(\g\oplus V,\Courant{\cdot,\cdot,\cdot}_{\rho,\omega})$ is a Lie triple system, where  the operation $\Courant{\cdot,\cdot,\cdot}_{\rho,\omega}$ is defined to be for all $x,y,z \in \g, ~u,v,w \in V$
$$\Courant{(x,u),(y,v),(z,w)}_{\rho,\omega}:=(\Courant{x,y,z}_\g,D_\rho(x,y)w+\rho(y,z)u-\rho(x,z)v+\omega(x,y,z)).$$
Denote this Lie triple system by $\g\ltimes_{\rho,\omega}V$, and thus $(\g\ltimes_{\rho,\omega}V,\g,V)$ becomes a quasi-twilled Lie triple system, where $\Courant{\cdot,\cdot,\cdot}_\g:=\Courant{\cdot,\cdot,\cdot}_1$, $\rho(x,y)u:=\Courant{u,x,y}_2$ and $\omega(B(u),B(v),B(w)):=\Courant{u,v,w}_2$.
\end{ex}

\begin{ex}\label{Rey}
As a special case of Example \ref{twisted}, consider $V=\g$, $\rho=\frkR$ the adjoint representation as in Example \ref{ad}, and $\omega=\Courant{\cdot,\cdot,\cdot}_\g$. Then we obtain a (quasi-)twilled Lie triple system $(\g\ltimes_{\frkR,\omega}\g,\g,\g)$, where $\Courant{\cdot,\cdot,\cdot}_\g:=\Courant{\cdot,\cdot,\cdot}_1$ and $\rho(x,y)u:=\Courant{u,x,y}_2$.
\end{ex}

Before giving another example, let us recall the notion of matched pairs of Lie triple systems introduced in \cite{ZXQ}.

\begin{defi}
A {\bf matched pair} of Lie triple systems consists of a pair of two Lie triple systems $(\g_1,\Courant{\cdot,\cdot,\cdot}_{\g_1})$ and $(\g_2,\Courant{\cdot,\cdot,\cdot}_{\g_2})$, and a pair of two linear maps $\rho_1:\ot^2\g_1\longrightarrow\gl(\g_2)$ and $\rho_2:\ot^2\g_2\longrightarrow\gl(\g_1)$, such that the following conditions are satisfied:
\begin{itemize}
\item[\rm(i)] $(\g_2;\rho_1)$ is a representation of $\g_1$;
\item[\rm (ii)] $(\g_1;\rho_2)$ is a representation of $\g_2$;
\item[\rm (iii)] For all $x,y,z\in \g_1$ and $u,v,w\in \g_2$, the following equalities hold:
\begin{eqnarray*}
~ \rho_2(u,v)\Courant{x,y,z}_{\g_1}&=&\Courant{x,y,\rho_2(u,v)z}_{\g_1}-\rho_2(D_1(x,y)u,v)z-\rho_2(u,D_1(x,y)v)z,\\
~\label{mtp8} \Courant{x,y,\rho_2(u,v)z}_{\g_1}&=&\rho_2(u,D_1(x,y)v)z+\rho_2(\rho_1(z,y)u,v)x-\rho_2(\rho_1(z,x)u,v)y,\\
~\label{mtp10} \Courant{\rho_2(u,v)x,y,z}_{\g_1}&=&\rho_2(u,\rho_1(y,z)v)x+D_2(v,\rho_1(x,y)u)z-\rho_2(v,\rho_1(x,z)u)y,\\
~ \rho_1(x,y)\Courant{u,v,w}_{\g_2}&=&\Courant{u,v,\rho_1(x,y)w}_{\g_2}-\rho_1(D_2(u,v)x,y)w-\rho_1(x,D_2(u,v)y)w\\
~\label{mtp16}  \Courant{u,v,\rho_1(x,y)w}_{\g_2}&=&\rho_1(x,D_2(u,v)y)w+\rho_1(\rho_2(w,v)x,y)u-\rho_1(\rho_2(w,u)x,y)v,\\
~\label{mtp18} \Courant{\rho_1(x,y)u,v,w}_{\g_2}&=&\rho_1(x,\rho_2(v,w)y)u+D_1(y,\rho_2(u,v)x)y-\rho_1(y,\rho_2(u,w)x)v,
\end{eqnarray*}
\end{itemize}
where $D_1(x,y)=D_{\rho_1}(x,y)=\rho_1(y,x)-\rho_1(x,y)$ and $D_2(u,v)=D_{\rho_2}(u,v)=\rho_2(v,u)-\rho_2(u,v)$. We denote a matched pair of Lie triple systems by a quadruple $(\g_1,\g_2;\rho_1,\rho_2)$ or $\Big((\g_1,\rho_2),(\g_2,\rho_1)\Big)$.
\end{defi}

Let $(\g_1,\g_2;\rho_1,\rho_2)$ be a matched pair of Lie triple systems, then there exists a Lie triple system structure $\Courant{\cdot,\cdot,\cdot}_{\bowtie}$ on the direct sum $\g_1\oplus\g_2$ given by for all $x,y,z\in \g_1,u,v,w\in \g_2$,
\begin{eqnarray*}
~ &&\Courant{(x,u),(y,v),(z,w)}_{\bowtie}\\
~ &=&(\Courant{x,y,z}_{\g_1}+D_2(u,v)z+\rho_2(v,w)z-\rho_2(u,w)y,\Courant{u,v,w}_{\g_2}+D_1(x,y)w+\rho_2(y,z)u-\rho_2(x,z)v).
\end{eqnarray*}
We call this Lie triple system the {\bf double} of $(\g_1,\g_2;\rho_1,\rho_2)$ and denote it by $(\g_1\bowtie\g_2,\Courant{\cdot,\cdot,\cdot}_{\bowtie})$.

\begin{ex}\label{defor}
Let $(\g_1,\g_2;\rho_1,\rho_2)$ be a matched pair of Lie triple system, and $(\g_1\bowtie\g_2,\Courant{\cdot,\cdot,\cdot}_{\bowtie})$ its double. Then $(\g_1\bowtie\g_2,\g_1,\g_2)$ forms a quasi-twilled Lie triple system, in which case $\Courant{x,y,z}_{\g_1}:=\Courant{x,y,z}_1, D_2(u,v)z:=\Courant{u,v,z}_1,\rho_2(v,w)x:=\Courant{x,v,w}_1,\Courant{u,v,w}_{\g_2}:=\Courant{u,v,w}_2, D_1(u,v)z:=\Courant{x,y,w}_2$ and $\rho_1(y,z)u:=\Courant{u,y,z}_2$.
\end{ex}

\section{Cohomology for deformation maps of type I}
Recall that if the triple $(\huaG,\g_1,\g_2)$ is a quasi-twilled Lie triple system, then we write the operation on $\huaG$ as $\Theta=\hat\phi_1+\hat\mu_1+\hat\mu_2$, where $\hat\phi_1,~\hat\mu_1, ~\hat\mu_2$ have the form
\begin{eqnarray}\label{equi}
\left\{\begin{array}{rcl}
~~\hat\phi_1((x,u),(y,v),(z,w))&=&(0,\Courant{x,y,z}_2),\\
~~\hat\mu_1((x,u),(y,v),(z,w))&=&(\Courant{x,y,z}_1,\Courant{x,y,w}_2+\Courant{u,y,z}_2-\Courant{v,x,z}_2),\\
~~\hat\mu_2((x,u),(y,v),(z,w))&=&\Big(\Courant{u,v,z}_1+\Courant{x,v,w}_1-\Courant{y,u,w}_1,\Courant{u,v,w}_2\Big).
\end{array}\right.
\end{eqnarray}

\subsection{Deformation maps of type I of quasi-twilled Lie triple systems}
\begin{defi}
Let the triple $(\huaG,\g_1,\g_2)$ be a quasi-twilled Lie triple system. A {\bf deformation map of type I} ($\huaD$-map for short) of $(\huaG,\g_1,\g_2)$ is a linear map $D:\g_1\longrightarrow\g_2$ such that for any $x,y,z\in \g_1$,
\begin{eqnarray*}
~ &&D\Big(\Courant{x,y,z}_1+\Courant{D(x),D(y),z}_1+\Courant{x,D(y),D(z)}_1-\Courant{y,D(x),D(z)}_1\Big)\\
~ &=&\Courant{x,y,z}_2+\Courant{x,y,D(z)}_2+\Courant{D(x),y,z}_2-\Courant{D(y),x,z}_2+\Courant{D(x),D(y),D(z)}_2.
\end{eqnarray*}
\end{defi}

\begin{rmk}
Note that $\huaD$-maps may not exist. For example, let us consider the quasi-twilled Lie triple system given in Example \ref{twisted}. In this case, the linear map $D:\g\longrightarrow V$ is a $\huaD$-map if and only if the following equation holds
$$\omega(x,y,z)=D(\Courant{x,y,z}_\g)-D_\rho(x,y)D(z)-\rho(y,z)D(x)+\rho(x,z)D(y)=\de(-D)(x,y,z),\quad\forall x,y,z\in \g,$$
where $\de$ is the coboundary map of the Lie triple system $(\g,\Courant{\cdot,\cdot,\cdot}_\g)$ with coefficients in the representation $(V;\rho)$, which implies that $\omega$ is a 2-coboundary. ($\omega$ is also a 1-cocycle).
\end{rmk}

Let $D:\g_1\longrightarrow\g_2$ be a linear map. Denote the graph of $D$ by
$$Gr(D):=\{(x,D(x))|x\in \g_1\}.$$

As we can see in the following proposition, the condition of $\huaD$-map is sufficient and necessary in making the graph of $D$ being a subalgebra.

\begin{pro}\label{graph}
Let $(\huaG,\g_1,\g_2)$ be a twilled Lie triple system. A linear map $D:\g_1\longrightarrow\g_2$ is a $\huaD$-map if and only if $Gr(D)$ is a subalgebra of $\huaG$. In this case, $(\g_2,Gr(D))$ is a matched pair of Lie triple systems.
\end{pro}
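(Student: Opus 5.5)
The plan is to compute the bracket of three elements of the graph directly, using the decomposition $\Theta=\hat\phi_1+\hat\mu_1+\hat\mu_2$ from \eqref{equi}. By trilinearity, for $x,y,z\in\g_1$ we expand $\Courant{(x,D(x)),(y,D(y)),(z,D(z))}_\huaG$ into its eight pieces, according to whether each slot carries a $\g_1$ or a $\g_2$ entry. Reading off the $\g_1$-component gives
$$\Courant{x,y,z}_1+\Courant{D(x),D(y),z}_1+\Courant{x,D(y),D(z)}_1-\Courant{y,D(x),D(z)}_1 .$$
The $\g_2$-component is
$$\Courant{x,y,z}_2+\Courant{x,y,D(z)}_2+\Courant{D(x),y,z}_2-\Courant{D(y),x,z}_2+\Courant{D(x),D(y),D(z)}_2 .$$
In this bookkeeping we rewrite mixed terms such as $\Courant{x,D(y),z}$ via skew-symmetry in the first two arguments. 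The terms with two entries from $\g_2$ then appear with the sign conventions of $\hat\mu_2$. In the strict case $\hat\psi=0$, so there are no further contributions to either projection.

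With this formula the equivalence is immediate. An element $(a,b)\in\huaG$ lies in $Gr(D)$ if and only if $b=D(a)$. So $Gr(D)$ is closed under the bracket exactly when the $\g_2$-component above equals $D$ of the $\g_1$-component for all $x,y,z\in\g_1$, and this is verbatim the defining identity of a $\huaD$-map. Since $Gr(D)$ is always a linear subspace, being closed under the bracket is the same as being a subalgebra.

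For the matched pair statement, note that $\huaG=\g_2\oplus Gr(D)$ as vector spaces, since $(x,u)=(0,u-D(x))+(x,D(x))$. Both summands are subalgebras: $\g_2$ by the quasi-twilled hypothesis, and $Gr(D)$ by the first part. In the twilled case the hypothesis $\hat\phi_1=0$ makes $\g_1$ a subalgebra as well; more importantly, $\huaG$ becomes a Lie triple system that is a vector-space direct sum of two subalgebras. We then invoke the result from \cite{ZXQ} that a Lie triple system decomposing as a direct sum of two subalgebras (a strict twilled Lie triple system) is the double of a matched pair. Applied to $(\huaG,\g_2,Gr(D))$, this gives the matched pair $(\g_2,Gr(D))$. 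The representations are obtained by projecting the mixed brackets, for instance $\rho(u,v)(x,D(x))$ taken as the $Gr(D)$-component of $\Courant{(x,D(x)),u,v}_\huaG$.

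The main obstacle is checking that this new decomposition is again strict, i.e.\ that the $\hat\psi$-type component vanishes. Concretely, brackets with two entries from one summand and one from the other must not acquire a component in the summand that is represented only once. For two $\g_2$ entries and one graph entry this holds because $\g_2$ is an ideal-like piece of $\hat\mu_2$. For two graph entries and one $\g_2$ entry it must be verified by projecting with $(a,b)\mapsto(0,b-D(a))$. I would first try to express these projections through $\hat\mu_1,\hat\mu_2$ and the $\huaD$-map identity; if strictness fails in general, the matched pair should instead be understood in the weaker (quasi-)twilled sense of \cite{ZXQ}.
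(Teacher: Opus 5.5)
Your computation of $\Courant{(x,D(x)),(y,D(y)),(z,D(z))}_\huaG$ and the resulting equivalence with the $\huaD$-map identity are correct, and they coincide with the paper's argument. The gap is in the matched-pair part.

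A splitting of a Lie triple system into two subalgebras does \emph{not} make it the double of a matched pair. In $\Courant{\cdot,\cdot,\cdot}_{\bowtie}$, every bracket with two entries from one summand and one from the other lands in the summand occurring once. This is the condition $\hat\psi=0$, and it is an additional requirement. Your claim that it is automatic for one graph entry and two $\g_2$ entries is false; that is exactly where it breaks. From \eqref{equi}, for $x,y\in\g_1$ and $v,w\in\g_2$,
\begin{gather*}
\Courant{(x,D(x)),(0,v),(0,w)}_\huaG=\big(\Courant{x,v,w}_1,\ \Courant{D(x),v,w}_2\big),\\
\Courant{(x,D(x)),(y,D(y)),(0,w)}_\huaG=\big(a,\ \Courant{x,y,w}_2+\Courant{D(x),D(y),w}_2\big),\qquad a=\Courant{x,D(y),w}_1-\Courant{y,D(x),w}_1.
\end{gather*}
So strictness of $\huaG=\g_2\oplus Gr(D)$ requires in particular $D\Courant{x,v,w}_1=\Courant{D(x),v,w}_2$ and $a=0$. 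Neither follows from the $\huaD$-map identity. For instance, take the direct product $\g\oplus\g$ of Example~\ref{directt} with $\h=\g$ non-abelian, and $D=\Id_{\g}$, which is a homomorphism and hence a $\huaD$-map by Example~\ref{direct}. Then $\Courant{(x,x),(0,v),(0,w)}=(0,\Courant{x,v,w}_{\g})$ is a nonzero element of $\g_2$. In any double of $\g_2$ and $Gr(D)$, such a bracket would have to lie in $Gr(D)$.

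Hence the last sentence of the proposition is false in general, read as you intend it (with $\huaG$ as the double and the actions obtained by projecting mixed brackets). The paper's own one-line justification, ``since $\huaG=Gr(D)\oplus\g_2$'', has exactly the same gap. Structurally, $\exp(\hat D):(\huaG,\Theta^D)\to(\huaG,\Theta)$ is an isomorphism carrying $\g_1$ onto $Gr(D)$ and fixing $\g_2$. The $\hat\psi$-type part of $\Theta^D$ is $[\hat\mu_2,\hat D]_{\mathsf{LTS}}$, which sends $\big((x,0),(0,v),(0,w)\big)$ to $\big(0,\Courant{D(x),v,w}_2-D\Courant{x,v,w}_1\big)$. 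This term is absent from \eqref{strict-quasi}.

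What can honestly be concluded is that $\huaG=\g_2\oplus Gr(D)$ is a twilled decomposition in the non-strict sense, since both summands are subalgebras. It is strict, so that $\huaG$ is the double of a matched pair, only under extra compatibility conditions such as the two displayed above. These hold automatically when $\hat\mu_2=0$, e.g.\ for $\g\ltimes_\rho V$. Your fallback to a weaker notion was the right instinct, but state it as the conclusion rather than leaving it conditional, and drop the ``ideal-like'' argument.
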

\begin{proof}
For all $(x,D(x)),(y,D(y)),(z,D(z))\in Gr(D)$, we have
\begin{eqnarray*}
~ &&\Theta\Big((x,D(x)),(y,D(y)),(z,D(z))\Big)\\
~ &=&\Big(\Courant{x,y,z}_1+\Courant{D(x),D(y),z}_1+\Courant{x,D(y),D(z)}_1-\Courant{y,D(x),D(z)}_1,\\
~ &&\Courant{x,y,z}_2+\Courant{x,y,D(z)}_2+\Courant{D(x),y,z}_2-\Courant{D(y),x,z}_1+\Courant{D(x),D(y),D(z)}_2\Big).
\end{eqnarray*}
Thus, $Gr(D)$ is a subalgebra of $\huaG$, i.e., $\Theta\Big((x,D(x)),(y,D(y)),(z,D(z))\Big)\in Gr(D)$, if and only if
\begin{eqnarray*}
~ &&D\Big(\Courant{x,y,z}_1+\Courant{D(x),D(y),z}_1+\Courant{x,D(y),D(z)}_1-\Courant{y,D(x),D(z)}_1\Big)\\
~ &=&\Courant{x,y,z}_2+\Courant{x,y,D(z)}_2+\Courant{D(x),y,z}_2-\Courant{D(y),x,z}_1+\Courant{D(x),D(y),D(z)}_2,
\end{eqnarray*}
which implies that $D$ is a $\huaD$-map of $(\huaG,\g_1,\g_2)$.

Since $\huaG=Gr(D)\oplus\g_2$, it follows that $(\g_2,Gr(D))$ is a matched pair of Lie tripe systems.
\end{proof}

We give several examples of $\huaD$-maps on each special quasi-twilled Lie triple systems as in Examples \ref{crossedd}-\ref{directt}.
\begin{ex}\label{crossed}
Consider the quasi-twilled Lie triple system $(\g\ltimes_\rho\h,\g,\h)$ given in Example \ref{crossedd}. In this case, a $\huaD$-map is a linear map $D:\g\longrightarrow\h$ such that for any $x,y,z\in \g$,
$$D\Courant{x,y,z}=D_\rho(x,y)D(z)+\rho(y,z)D(x)-\rho(x,z)D(y)+\lambda\Courant{D(x),D(y),D(z)},$$
which is exactly the {\bf crossed homomorphism of weight $\lambda$} from $\g$ to $\h$.
\end{ex}

\begin{ex}\label{semidirect}
Consider the quasi-twilled Lie triple system $(\g\ltimes_\rho V,\g,V)$ given in Example \ref{semidirectt}. In this case, a $\huaD$-map is a linear map $D:\g\longrightarrow V$ such that for all $x,y,z\in \g$,
$$D\Courant{x,y,z}=D_\rho(x,y)D(z)+\rho(y,z)D(x)-\rho(x,z)D(y),$$
which implies that $D$ is a {\bf derivation} from $\g$ to $V$. In particular, if we take $\rho=\frkR$ to be the adjoint representation of $\g$ on itself, then we obtain the usual derivation.
\end{ex}

\begin{ex}\label{direct}
Consider the quasi-twilled Lie triple system $(\g\oplus\h,\g,\h)$ given in Example \ref{directt}. In this case, a $\huaD$-map is a linear map $D:\g\longrightarrow\h$ such that
$$D(\Courant{x,y,z}_\g)=\Courant{D(x),D(y),D(z)}_\h,\quad \forall x,y,z\in \g,$$
which is exactly a {\bf homomorphism} from $\g$ to $\h$.
\end{ex}

At the end of this subsection, we show that a $\huaD$-map plays a crucial role in the twisting theory. We first recall the notion of twisting \cite{ZXQ}. Before this, let us introduce a notation. Suppose that $(\huaG,\g_1,\g_2)$ is a quasi-twilled Lie triple system, and $f\in C^1_{\mathsf{LTS}}(\huaG,\huaG)$ is a $1$-cochain. Set
$$\exp(X_f)(\cdot):=\sum_{k=0}^\infty \frac{1}{k!}X_f^k,$$
where $X_f^k$ is defined to be
\begin{eqnarray*}
X_f^k:=
\begin{cases}
\underbrace{[\cdots[[}_k\cdot,f]_{\mathsf{LTS}},f]_{\mathsf{LTS}},\cdots,f]_{\mathsf{LTS}}, & k\geq 1,\\
\Id, & k=0.
\end{cases}
\end{eqnarray*}
Note that $\exp(X_f)(\cdot)$ is not well defined in general. If $D:\g_1\longrightarrow\g_2$ is a linear map, then it follows that its lift is $\hat D\in \Hom(\huaG,\huaG)$ defined to be
$$\hat D(x,u)=(0,D(x)),\quad \forall x\in \g_1,~u\in \g_2,$$
and that $\hat D\circ\hat D=0$.

\begin{defi}
Let $((\huaG,\Theta),\g_1,\g_2)$ be a quasi-twilled Lie triple system and $D:\g_1\longrightarrow\g_2$ a linear map. The transformation $\Theta^D:=\exp(X_{\hat D})(\Theta)$ is called the {\bf twisting} of $\Theta$ by $D$.
\end{defi}

The twisting of $\Theta$ has the following properties, and one can see \cite{ZXQ} for more details.
\begin{lem}\label{TTTtwisting}
With the above notations, we have the following conclusions:
\begin{itemize}
\item[\rm(i)] the following identity holds
\begin{eqnarray}
\Theta^D=\exp(-\hat D)\circ \Theta\circ \Big(\exp(\hat D)\ot\exp(\hat D)\ot\exp(\hat D)\Big);\label{tw}
\end{eqnarray}
\item[\rm(ii)] the twisting $\Theta^D$ is a Lie triple system structure on $\huaG$;
\item[\rm(iii)] the map $\exp(\hat D):(\huaG,\Theta^D)\longrightarrow (\huaG,\Theta)$ is an isomorphism between Lie triple systems.
\end{itemize}
\end{lem}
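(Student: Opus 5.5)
Lemma \ref{TTTtwisting} has three parts, and I would prove them in the order (i), (iii), (ii): part (ii) follows at once from (iii) by transport of structure.

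For (i), the starting point is that $\hat D\circ\hat D=0$. Hence $\exp(\hat D)=\Id+\hat D$ and $\exp(-\hat D)=\Id-\hat D$, and these two maps are mutually inverse linear automorphisms of $\huaG$. Write $\Phi_t:=\exp(t\hat D)$ and put
$$\Theta_t:=\Phi_t^{-1}\circ\Theta\circ(\Phi_t\ot\Phi_t\ot\Phi_t).$$
Differentiating in $t$ gives
$$\frac{d}{dt}\Theta_t=-\hat D\circ\Theta_t+\Theta_t\circ(\hat D\ot\Id\ot\Id+\Id\ot\hat D\ot\Id+\Id\ot\Id\ot\hat D).$$
The first task is to check against formula \eqref{control} that, for $p=1$ and $q=0$, this right-hand side is exactly $[\Theta_t,\hat D]_{\mathsf{LTS}}$. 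The first two sums in \eqref{control} insert $\hat D$ into the two slots of $\huaX_1$; the last sum inserts it into the slot $x$; and $-\hat D\circ\Theta_t$ is the term $-(-1)^{0}\hat D\circ\Theta_t$. The signs work out because $\hat D$ has degree $0$.

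So $\Theta_t$ solves the linear ODE $\frac{d}{dt}\Theta_t=X_{\hat D}(\Theta_t)$ with $\Theta_0=\Theta$, and therefore $\Theta_t=\exp(tX_{\hat D})(\Theta)$. Setting $t=1$ gives \eqref{tw}. To keep the argument purely algebraic, I would instead compare coefficients of $t^k$ in the two polynomial expansions. Both sides are polynomials in $t$: the right-hand side because $\Phi_t=\Id+t\hat D$, and the left-hand side because $X_{\hat D}^k(\Theta)$ vanishes for $k\ge 4$. That vanishing holds since each bracket with $\hat D$ either feeds a $\g_2$-output into $\hat D$ or replaces a $\g_1$-input by a $\g_2$-input, and $\Theta$ has only three inputs. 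Comparing coefficients then amounts to the Leibniz-type identity $X_{\hat D}^k(\Theta)=\sum \binom{k}{\cdot}\,(\text{terms with }\hat D\text{ placed }k\text{ times})$. The same finiteness shows $\exp(X_{\hat D})$ is well defined here.

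For (iii), I would note that (i) is equivalent to
$$\exp(\hat D)\big(\Theta^D(a,b,c)\big)=\Theta\big(\exp(\hat D)a,\exp(\hat D)b,\exp(\hat D)c\big),\qquad a,b,c\in\huaG.$$
This says precisely that the bijection $\exp(\hat D)$ intertwines $\Theta^D$ and $\Theta$. For (ii), a structure transported along a linear isomorphism from the Lie triple system $(\huaG,\Theta)$ satisfies Eqs.~\eqref{LY1} and \eqref{fundamental}, since each identity pulls back term by term. Thus (ii) holds, and with it (iii): $\exp(\hat D)$ is then an isomorphism of Lie triple systems. Alternatively, one can argue that $X_{\hat D}$ is a derivation of the graded Lie algebra by the Jacobi identity, so $\exp(X_{\hat D})$ preserves $[\Theta,\Theta]_{\mathsf{LTS}}=0$.

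The main obstacle is the sign and convention bookkeeping in (i): confirming that $[\cdot,\hat D]_{\mathsf{LTS}}$, as defined through \eqref{control}, is exactly the derivation-like operator $\Theta\mapsto\Theta\circ(\text{insert }\hat D)-\hat D\circ\Theta$, with the correct overall sign in front of $\hat D\circ\Theta$. If the sign convention instead produced $\exp(\hat D)\circ\Theta\circ\exp(-\hat D)^{\ot3}$, I would recheck the order of arguments in $X_f^k$, which is defined as $[\cdot,f]$.
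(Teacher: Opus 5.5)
Your proof is correct. The paper does not prove this lemma here; it defers to \cite{ZXQ}. The usual argument in this framework stays inside the graded Lie algebra $(C^*_{\mathsf{LTS}}(\huaG,\huaG),[\cdot,\cdot]_{\mathsf{LTS}})$. It verifies (i) by expanding the iterated brackets $[\Theta,\hat D]_{\mathsf{LTS}}$, $[[\Theta,\hat D]_{\mathsf{LTS}},\hat D]_{\mathsf{LTS}},\dots$ and matching them, with their factors $\frac{1}{k!}$, against the expansion of $(\Id-\hat D)\circ\Theta\circ(\Id+\hat D)^{\ot 3}$. It obtains (ii) from the fact that $[\cdot,\hat D]_{\mathsf{LTS}}$ is a derivation, so $\exp(X_{\hat D})$ carries the Maurer--Cartan element $\Theta$ to a Maurer--Cartan element; this is your closing alternative.

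You instead prove (i) through the one-parameter family $\Theta_t$. Your sign check against \eqref{control} is right: for $p=1$, $q=0$ the bracket is insertion of $\hat D$ into the three slots minus $\hat D\circ\Theta_t$. Since $\Theta_t=\sum_k A_kt^k$ is a polynomial with $\Theta_t'=X_{\hat D}(\Theta_t)$, the recursion $(k+1)A_{k+1}=X_{\hat D}(A_k)$ gives $A_k=\frac{1}{k!}X^k_{\hat D}(\Theta)$ directly. Parts (ii) and (iii) then follow together by transport of structure along the linear automorphism $\exp(\hat D)$.

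Your route needs no explicit bracket computations and no appeal to the Maurer--Cartan description of Lie triple system structures from \cite{XST}. The bracket route is the one the paper relies on afterwards, since it reads off the components of $\Theta^D$ as iterated brackets in \eqref{strict-quasi}.

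One correction: your reason for $X^k_{\hat D}(\Theta)=0$ when $k\ge 4$ has the bookkeeping reversed. $\hat D\circ P$ only sees the $\g_1$-part of the output of $P$. Inserting $\hat D$ turns a slot that $P$ reads in $\g_2$ into a slot reading $\g_1$. So the quantity (number of $\g_2$-inputs) $+$ ($1$ if the output lies in $\g_1$) drops by one per bracket.

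That count gives $X^5_{\hat D}(\Theta)=0$ for any trilinear $\Theta$. You get $X^4_{\hat D}(\Theta)=0$ only because $\g_2$ is a subalgebra, so $\Theta$ has no $\g_2\ot\g_2\ot\g_2\to\g_1$ component. You do not need this a priori anyway: the recursion above already forces $X^k_{\hat D}(\Theta)=k!\,A_k=0$ beyond the degree of $\Theta_t$. That also settles that $\exp(X_{\hat D})(\Theta)$ is well defined.
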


\begin{pro}\label{strictquasi}
Let $((\huaG,\Theta),\g_1,\g_2)$ be a twilled Lie triple system and $D:\g_1\longrightarrow\g_2$ a linear map. Then the twisting $((\huaG,\Theta^D),\g_1,\g_2)$ is still a (strict) quasi-twilled Lie triple system.
\end{pro}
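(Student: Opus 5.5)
Since $\Theta^D$ is a Lie triple system structure by Lemma \ref{TTTtwisting}(ii), we need only show that $\g_2$ is still a subalgebra of $(\huaG,\Theta^D)$ and that the $\hat\psi$-component of $\Theta^D$ is zero. Here ``twilled'' is taken in the paper's standing strict sense, so we may write $\Theta=\hat\mu_1+\hat\mu_2$.

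We work with the explicit formula \eqref{tw}. Because $\hat D\circ\hat D=0$, we have $\exp(\hat D)=\Id+\hat D$, i.e. $(x,u)\mapsto(x,u+D(x))$, and $\exp(-\hat D)=\Id-\hat D$, i.e. $(a,b)\mapsto(a,b-D(a))$. Hence
\begin{eqnarray*}
\Theta^D\big((x,u),(y,v),(z,w)\big)=(\Id-\hat D)\,\Theta\big((x,u+D(x)),(y,v+D(y)),(z,w+D(z))\big).
\end{eqnarray*}

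First, restrict to arguments in $\g_2$, i.e. $x=y=z=0$. Then $\Theta^D((0,u),(0,v),(0,w))=(\Id-\hat D)\Theta((0,u),(0,v),(0,w))$. Since $\g_2$ is a subalgebra of $(\huaG,\Theta)$, the inner value is $(0,\Courant{u,v,w}_2)$, and $\Id-\hat D$ fixes it because its $\g_1$-component is $0$. So $\g_2$ is a subalgebra of $\Theta^D$, with the same bracket. Thus $((\huaG,\Theta^D),\g_1,\g_2)$ is a quasi-twilled Lie triple system.

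For strictness, we expand $\Theta^D$ multilinearly and sort components by type $\g^{l,k}\to\g_i$. The $\hat\psi$-part consists of the components on $\g^{2,1}$ with values in $\g_1$ and on $\g^{1,2}$ with values in $\g_2$.

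\begin{itemize}
\item Take two $\g_1$-entries and one $\g_2$-entry. Each $\g_1$-entry $x$ becomes $x+D(x)$. The inner $\Theta$ then has inputs with at least one genuine $\g_2$ argument. Its $\g_1$-component is a term of $\hat\mu_2$ (mixed $\g_1,\g_2,\g_2$ or $\g_2,\g_2,\g_1$ types); these terms come from \emph{degree raising} by $D$ and are bilinear in $u$ and $D(\cdot)$, so they are not the obstruction. We must instead check the type-count: the $\hat\psi$ obstruction would be $\Courant{x,y,w}_1$-type terms, i.e. $\g_1$-output with exactly one $\g_2$ input before substitution. Such terms are absent in $\Theta$. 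Substituting $x\mapsto x+D(x)$ only increases the number of $\g_2$ inputs, and applying $\Id-\hat D$ only moves $\g_1$-output to $\g_2$-output. So the $\g_1$-component on $\g^{2,1}$ arises only from $\Theta$'s $\g_1$-components on $\g^{l',k'}$ with $k'\ge1$, and these are nonzero only for $k'\ge2$.
\item Similarly, the $\g_2$-output on $\g^{1,2}$ gets contributions from $\Theta$'s $\g_2$-output with $k'\ge2$ (namely $\Courant{u,v,w}_2$ with a $D(x)$, which lands in $\g^{1,2}$) and from $-D$ applied to $\g_1$-outputs with $k'\ge2$.
\end{itemize}

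These terms do not vanish in general. This is the main obstacle. The resolution is to use the precise bookkeeping of the paper's $\hat\psi$, whose $\g_2$-part is $\Courant{u,v,z}_2+\Courant{x,v,w}_2-\Courant{y,u,w}_2$. I would compute these terms explicitly and show that they cancel by using the identity \eqref{tw} together with the fact that $\Theta^D$ satisfies \eqref{LY1}. Failing that, I would argue degree-wise in the graded Lie algebra: each bracket with $\hat D$ shifts bidegree by $(-1,+1)$, and a bidegree computation shows that $X_{\hat D}^k(\hat\mu_1+\hat\mu_2)$ has no component of $\hat\psi$'s bidegree. I expect this verification of the vanishing $\hat\psi$-part to be the delicate step.
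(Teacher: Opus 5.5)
Your first step is exactly the paper's proof: $\exp(\pm\hat D)=\Id\pm\hat D$ fixes $\g_2$ pointwise, so $\Theta^D$ agrees with $\Theta$ on $\g_2\otimes\g_2\otimes\g_2$ and $\g_2$ stays a subalgebra. The paper stops there and checks nothing beyond the quasi-twilled condition. Your strictness half is where the proposal breaks, and neither suggested fix can work, because the $\hat\psi$-component of $\Theta^D$ genuinely does not vanish.

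Done correctly, in either sign convention, the bidegree count shows the opposite of what you claim. With $\Theta=\hat\mu_1+\hat\mu_2$, a single bracket with $\hat D$ moves $\hat\mu_2$ exactly into the bidegree of $\hat\psi$. By contrast, $[\hat\mu_1,\hat D]_{\mathsf{LTS}}$ lands in the bidegree of $\hat\phi_1$, and $[[\hat\mu_1,\hat D]_{\mathsf{LTS}},\hat D]_{\mathsf{LTS}}=0$. So the $\hat\psi$-part of $\Theta^D$ is precisely $[\hat\mu_2,\hat D]_{\mathsf{LTS}}$.

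Concretely, take $x,y\in\g_1$ and $v,w\in\g_2$. The $\g_1$-component of $\Theta^D\big((x,0),(y,0),(0,w)\big)$ is $\Courant{x,D(y),w}_1-\Courant{y,D(x),w}_1$. The $\g_2$-component of $\Theta^D\big((x,0),(0,v),(0,w)\big)$ is $\Courant{D(x),v,w}_2-D\Courant{x,v,w}_1$. Both are linear in an arbitrary $D$, so no identity of $\Theta$ or $\Theta^D$ such as \eqref{LY1} can cancel them. Your remark that ``these terms do not vanish in general'' was the right conclusion.

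Here is an explicit counterexample. Let $\g$ be a non-abelian Lie triple system and $\huaG=\g\ltimes_{\frkR}\g$. Take $\g_1$ to be the module copy and $\g_2$ the copy carrying $\Courant{\cdot,\cdot,\cdot}_\g$. Every bracket with two or more entries in $\g_1$ vanishes, so $\Theta=\hat\mu_2$ is strictly twilled, with $\Courant{a,v,w}_1=\Courant{a,v,w}_\g$ for $a\in\g_1$ and $v,w\in\g_2$. Let $D$ be the identity of $\g$. For $a,b\in\g_1$ and $z\in\g_2$, the formula above gives the $\hat\psi$-component $\Courant{a,b,z}_\g-\Courant{b,a,z}_\g=2\Courant{a,b,z}_\g$, which is not identically zero.

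So the ``(strict)'' part of the statement fails in general. What can be proved, and all that the paper's argument establishes, is that $((\huaG,\Theta^D),\g_1,\g_2)$ is quasi-twilled with $\hat\psi^D=[\hat\mu_2,\hat D]_{\mathsf{LTS}}$. Note also that the paper's later decomposition \eqref{strict-quasi} silently omits this term, so it offers no way around the obstruction.
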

\begin{proof}
By \eqref{tw}, we have for all $u,v,w\in \g_2$,
\begin{eqnarray*}
\Theta^D\Big((0,u),(0,v),(0,w)\Big)&=&\exp(-\hat D)\circ\Theta\circ\Big(\exp(\hat D)(0,u)\ot\exp(\hat D)(0,v)\ot\exp(\hat D)(0,w)\Big)\\
~ &=&\Theta\Big((0,u),(0,v),(0,w)\Big)\in \g_2,
\end{eqnarray*}
which implies that $\g_2$ is a subalgebra of $(\huaG,\Theta^D)$. Thus $((\huaG,\Theta^D),\g_1,\g_2)$ is a quasi-twilled Lie triple system.
\end{proof}

Write $\Theta^D:=\hat\mu_2^D+\hat\mu_1^D+\hat\phi_1^D$, then the substructures $\hat\mu_2^D,~\hat\mu_1^D,~\hat\phi_1^D$ have the form as follows:
\begin{eqnarray}\label{strict-quasi}
\left\{\begin{array}{rcl}
\hat\mu_2^D&=&\hat \mu_2,\\
\hat\mu_1^D&=&\hat\mu_1+\half[[\hat\mu_2,\hat D]_{\mathsf{LTS}},\hat D]_{\mathsf{LTS}},\\
\hat\phi_1^D&=&\hat\phi_1+[\hat\mu_1,\hat D]_{\mathsf{LTS}}+\frac{1}{6}[[[\hat\mu_2,\hat D]_{\mathsf{LTS}},\hat D]_{\mathsf{LTS}},\hat D]_{\mathsf{LTS}}.
\end{array}\right.
\end{eqnarray}

The following proposition shows that a $\huaD$-map plays an important role in making a quasi-twilled Lie triple system into a matched pair.
\begin{pro}\label{base}
Let $(\huaG,\g_1,\g_2)$ be a quasi-twilled Lie triple system and $D:\g_1\longrightarrow\g_2$ a linear map. Then Lie triple systems $(\g_1,\Courant{\cdot,\cdot,\cdot}_D)$ and $(\g_2,\Courant{\cdot,\cdot,\cdot}_2)$ form a matched pair of Lie triple systems $\Big(((\g_1,\Courant{\cdot,\cdot,\cdot}_D),\rho_D),((\g_2,\Courant{\cdot,\cdot,\cdot}_2),\rho_2)\Big)$  if and only if $D$ is a $\huaD$-map, where $\Courant{\cdot,\cdot,\cdot}_D$ and $\rho_D:\g_1\longrightarrow\gl(\g_2)$ are given to be for all $x,y\in \g_1,u\in \g_2$
\begin{eqnarray*}
\Courant{x,y,z}_D&:=&\Courant{x,y,z}_1+\Courant{D(x),D(y),z}_1+\Courant{x,D(y),D(z)}_1-\Courant{y,D(x),D(z)}_1,\\
\rho_D(x,y)u&:=&\Courant{u,x,y}_2+\Courant{u,D(x),D(y)}_2-D\bigg(\Courant{u,D(x),y}_1+\Courant{u,y,D(x)}_1\bigg).
\end{eqnarray*}
Here, $\rho_2:\ot^2\g_2\longrightarrow\gl(\g_1)$ is given by
$$\rho_2(u,v)x:=\Courant{x,u,v}_1,\quad \forall u,v\in \g_2,x\in \g_1.$$
\end{pro}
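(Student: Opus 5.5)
We would work entirely through the twisting $\Theta^D$ and use Lemma \ref{TTTtwisting} together with the decomposition \eqref{strict-quasi}. By Lemma \ref{TTTtwisting}(ii), $\Theta^D$ is a Lie triple system structure on $\huaG$. By Proposition \ref{strictquasi}, $((\huaG,\Theta^D),\g_1,\g_2)$ is again a strict quasi-twilled Lie triple system, with $\Theta^D=\hat\phi_1^D+\hat\mu_1^D+\hat\mu_2^D$.

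The guiding principle is the fact recalled after \eqref{equia}. A strict quasi-twilled system whose $\hat\phi_1$-component vanishes is a strict twilled one, and then $\g_1$ and $\g_2$, with the restricted brackets and the actions read off from $\hat\mu_1,\hat\mu_2$, form a matched pair whose double is the whole system. Conversely, if $\g_1$ and $\g_2$ with some given structures form a matched pair, we compare the double with $\Theta^D$. So the proof reduces to two computations.

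The first computation identifies the components of $\Theta^D$ using \eqref{tw}. We would evaluate $\Theta^D=\exp(-\hat D)\circ\Theta\circ(\exp(\hat D)^{\ot 3})$ on the pure arguments. Since $\exp(\hat D)(x,u)=(x,u+D(x))$ and $\exp(-\hat D)(a,b)=(a,b-D(a))$, expanding $\Theta((x,Dx),(y,Dy),(z,Dz))$ exactly as in the proof of Proposition \ref{graph} gives:
\begin{itemize}
\item the $\g_1$-component on $\g_1^{\ot3}$ is $\Courant{x,y,z}_D$;
\item the $\g_2$-component on $\g_1^{\ot3}$ is the defect
$$\Courant{x,y,z}_2+\Courant{x,y,Dz}_2+\Courant{Dx,y,z}_2-\Courant{Dy,x,z}_2+\Courant{Dx,Dy,Dz}_2-D\Courant{x,y,z}_D,$$
which is precisely $\hat\phi_1^D$.
\end{itemize}
Similarly, evaluating on $(0,u),(x,0),(y,0)$ gives the $\g_2$-part $\rho_D(x,y)u$. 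Here the $D(\Courant{u,Dx,y}_1+\Courant{u,y,Dx}_1)$ correction comes from $\exp(-\hat D)$ applied to the $\g_1$-component. Mixed terms with two $\g_2$ entries are unchanged, since $\hat\mu_2^D=\hat\mu_2$, so $\rho_2$ and $\Courant{\cdot,\cdot,\cdot}_2$ survive. The remaining $\g_2$-valued mixed terms, of the form $\Courant{x,y,w}$ and $\Courant{x,v,z}$, should be expressible through $\rho_D$ and $D_{\rho_D}$ by skew-symmetry and \eqref{LY1} for $\Theta^D$. Checking this consistency is routine but must be done.

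For the direction ($\Leftarrow$), we assume $D$ is a $\huaD$-map. Then the defect vanishes, so $\hat\phi_1^D=0$. Hence $(\huaG,\Theta^D)$ is a strict twilled Lie triple system, and by the cited result of \cite{ZXQ} its components form a matched pair. By the first computation these components are exactly $(\g_1,\Courant{\cdot,\cdot,\cdot}_D),\rho_D,(\g_2,\Courant{\cdot,\cdot,\cdot}_2),\rho_2$.

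For the direction ($\Rightarrow$), we assume the given data form a matched pair. Then the double $\g_1\bowtie\g_2$ is a Lie triple system whose bracket agrees with $\Theta^D$ on every component except possibly the $\g_1^{\ot 3}\to\g_2$ component, where the double has $0$ and $\Theta^D$ has $\hat\phi_1^D$. Both are Lie triple systems, so we would expand the fundamental identity \eqref{fundamental} for $\Theta^D$ and subtract that of the double. For instance, taking $x,y,z\in\g_1$ and $u\in\g_2$ in suitable slots, the difference should isolate terms linear in $\hat\phi_1^D$ and force $\hat\phi_1^D=0$, i.e.\ $D$ is a $\huaD$-map.

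We expect this converse step to be the main obstacle. Extracting $\hat\phi_1^D=0$ from equality of the remaining structure maps is not automatic. If the direct fundamental-identity argument does not close, the fallback is to read the statement as asserting that the double of the matched pair is $(\huaG,\Theta^D)$. That forces the $\phi$-component to vanish, which gives the $\huaD$-map equation by the graph computation of Proposition \ref{graph}.
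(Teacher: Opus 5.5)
\textbf{The gap is in the direction you treat as safe.} The step that fails is your appeal to Proposition \ref{strictquasi} for \emph{strictness} of $\Theta^D$. The paper's proof rests on exactly the same step.

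In $\exp(X_{\hat D})\Theta$, the first-order term $[\hat\mu_2,\hat D]_{\mathsf{LTS}}$ is of $\hat\psi$-type: it maps $\g^{2,1}\to\g_1$ and $\g^{1,2}\to\g_2$. The formula \eqref{strict-quasi} drops it, and Proposition \ref{strictquasi} only shows that $\g_2$ remains a subalgebra. Your own computation exhibits this term:
\begin{itemize}
\item On $(0,u),(x,0),(y,0)$, the $\g_1$-component $\Courant{u,Dx,y}_1+\Courant{u,x,Dy}_1$ is untouched by $\exp(-\hat D)$, so it survives in $\Theta^D$. Note that the term which appears is $\Courant{u,x,Dy}_1$, not $\Courant{u,y,Dx}_1$.
\item $\Theta^D((x,0),(0,v),(0,w))$ has $\g_2$-component $\Courant{Dx,v,w}_2-D\Courant{x,v,w}_1$. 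So your claim that mixed terms with two $\g_2$ entries are unchanged is false.
\end{itemize}
Hence $\hat\phi_1^D=0$ only says that $\g_1$ is also a subalgebra of $(\huaG,\Theta^D)$. The would-be double is $\Theta^D$ with its $[\hat\mu_2,\hat D]_{\mathsf{LTS}}$-part removed, and nothing forces that to satisfy \eqref{fundamental}.

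Here is a concrete failure. Take the direct product $\g\oplus\g$ of Example \ref{directt} with $\g=\mathfrak{sl}_2$, $\Courant{a,b,c}=[[a,b],c]$, and $D=\Id$, which is a homomorphism and hence a $\huaD$-map. Then $\Courant{\cdot,\cdot,\cdot}_D=\Courant{\cdot,\cdot,\cdot}_\g$, $\rho_D=\frkR$ and $\rho_2=0$. The fourth matched-pair axiom (with $D_2=0$) reads $\Courant{\Courant{u,v,w},x,y}=\Courant{u,v,\Courant{w,x,y}}$. This fails for $u=x=y=e$, $v=w=f$: the two sides are $4e$ and $-4e$.

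What the twisting argument does prove is weaker. For a $\huaD$-map, $(\g_1,\Courant{\cdot,\cdot,\cdot}_D)$ is a Lie triple system, and $\g_2\cong\huaG/\g_1$ is a representation of it via $\rho_D$. A genuine matched pair is guaranteed only under an extra hypothesis such as $[\hat\mu_2,\hat D]_{\mathsf{LTS}}=0$, e.g.\ $\hat\mu_2=0$ (the semidirect-product case).

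\textbf{The converse cannot be closed.} Your worry is justified, and no manipulation of \eqref{fundamental} will close it, because the implication is false. The four maps $\Courant{\cdot,\cdot,\cdot}_D$, $\Courant{\cdot,\cdot,\cdot}_2$, $\rho_D$, $\rho_2$ never involve $\hat\phi_1$, whereas the $\huaD$-map equation does.

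For a non-abelian Lie triple system $\g$, put $\Courant{(x,u),(y,v),(z,w)}=(0,\Courant{x,y,z}_\g)$ on $\g\oplus\g$. This is a strict quasi-twilled Lie triple system with $\hat\mu_1=\hat\mu_2=0$. For every $D$ all four maps vanish, so they trivially form a matched pair. Yet the $\huaD$-map equation reads $\Courant{x,y,z}_\g=0$, which fails. Your premise that the double agrees with $\Theta^D$ off the $\g_1^{\ot 3}\to\g_2$ component also fails in general, for the reason given above.

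The paper does not prove this direction either. Its computation only yields that $D$ is a $\huaD$-map if and only if $\hat\phi_1^D=0$, i.e.\ if and only if $\g_1$ is a subalgebra of $(\huaG,\Theta^D)$ (compare Proposition \ref{graph}). That is the correct equivalence, and it is what your fallback reading amounts to. However, identifying $(\huaG,\Theta^D)$ with a double again requires $[\hat\mu_2,\hat D]_{\mathsf{LTS}}$ to vanish.
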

\begin{proof}
It follows from Proposition \ref{strictquasi} that $((\huaG,\Theta^D),\g_1,\g_2)$ is a strict quasi-twilled  Lie triple system and write $\Theta^D=\hat\phi_1^D+\hat\mu_1^D+\hat\mu_2^D$. By \eqref{strict-quasi}, we obtain that
{\footnotesize
\begin{eqnarray*}
\hat\mu_2^D((x,u),(y,v),(z,w))&=&(\Big(\Courant{u,v,z}_1+\Courant{y,v,w}_1-\Courant{x,u,w}_1\Big),\Courant{u,v,w}_2),\\
\hat\mu_1^D((x,u),(y,v),(z,w))&=&(\Courant{x,y,z}_1+(\Courant{D(x),D(y),z}_1+\Courant{x,D(y),D(z)}_1-\Courant{y,D(x),D(z)}_1),\\
~ &&\Courant{u,x,y}_2+\Courant{u,D(x),D(y)}_2-\bigg(\Courant{u,D(x),y}_1+\Courant{u,y,D(x)}_1\bigg)+c.p.),\\
\hat\phi_1^D((x,u),(y,v),(z,w))&=&(0,\Courant{x,y,z}_2+(\Courant{D(x),y,z}_2+c.p.)-D\Courant{x,y,z}_1+\Courant{D(x),D(y),D(z)}_2\\
~ &&-D(\Courant{D(x),D(y),z}_1+c.p.)),
\end{eqnarray*}}
for all $x,y,z\in \g_1,~u,v,w\in \g_2$. Thus it is direct to see that $D$ is a $\huaD$-map if and only if $\hat\phi_1^D=0$, which implies that Lie triple systems $(\g_1,\Courant{\cdot,\cdot,\cdot}_D)$ and $(\g_2,\Courant{\cdot,\cdot,\cdot}_2)$ form a matched pair of Lie triple systems.
\end{proof}

\emptycomment{
\subsection{Maurer-Cartan characterization for deformation maps of type I}
In this subsection, we construct a curved $L_\infty$-algebra, whose Maurer-Cartan elements corresponds to deformation maps of type I.
\begin{defi}
A {\bf curved $L_\infty$-algebra} is a pair $(\g,\{l_k\}_{k=0}^\infty)$, where $\g=\oplus_{k\in \mathbb Z}\g_k$ is a $\mathbb Z$-graded vector space, the degrees of whose elements in $\g_k$ are assumed to be $k$, and $\{l_k\}_{k=0}^\infty$ is a collection of graded linear maps $l_k:\ot^k\g\longrightarrow\g,~(k\geq 0)$ of degree $1$ such that the following two conditions are satisfied for any homogeneous elements $x_1,\cdots,x_n\in \g$,
\begin{itemize}
\item[(i)] (graded symmetry) for all $\sigma\in S_n$, we have
$$l_n(x_{\sigma(1)},\cdots,x_{\sigma(n)})=\varepsilon(\sigma)l_n(x_1,\cdots,x_n),$$
\item[(ii)] (graded Jacobi identity) for all $n\geq0$,
$$\sum_{i=0}^n\sum_{\sigma\in \mathbb S_{(i,n-i)}}\varepsilon(\sigma)l_{n-i+1}\Big(l_i(x_{\sigma(1)},\cdots,x_{\sigma(i)}),x_{\sigma(i+1)},\cdots,x_{\sigma(n)}\Big)=0.$$
\end{itemize}
Here $\mathbb S_{(i,n-i)}$ stands for the collection of all $(i,n-i)$-shuffles and $\varepsilon(\mathfrak{\sigma})$ stands for the Koszul sign: switching any two successive elements $x_i$ and $x_{i+1}$ leads to a sign change $(-1)^{|x_i||x_{i+1}|}$.
\end{defi}

Note that a curved $L_\infty$-algebra $(\g,\{l_k\}_{k=0}^\infty)$ with $l_0=0$ is exactly an $L_\infty$-algebra.

\begin{defi}
Let $(\g,\{l_k\}_{k=0}^\infty)$ be a curved $L_\infty$-algebra. An element $\alpha\in \g_0$ of degree $0$ is called a {\bf Maurer-Cartan element } if $\alpha$ satisfies the following Maurer-Cartan equation
$$l_0+\sum_{k=1}^\infty\frac{1}{k!}l_k(\alpha,\cdots,\alpha)=0.$$
\end{defi}

Let $\alpha$ be a Maurer-Cartan element of a curved $L_\infty$-algebra $(\g,\{l_k\}_{k=0}^\infty)$. Define $l_k^\alpha:\ot^k\g\longrightarrow\g~~(k\geqslant 1)$ to be
$$l_k^\alpha(x_1,\cdots,x_k)=\sum_{n=0}^\infty \frac{1}{n!}l_{k+n}(\underbrace{\alpha,\cdots,\alpha}_{n},x_1,\cdots,x_k).$$

\begin{pro}
With the above notations, $(\g,\{l_k^\alpha\}_{k=1}^\infty)$ is an $L_\infty$-algebra, which is called the twisted $L_\infty$-algebra.
\end{pro}

\begin{rmk}
The (weakly) filtered $L_\infty$-algebra given by Dolgushev and Rogers guarantees the convergence of the series in the definition of Maurer-Cartan elements and Maurer-Cartan twisting above. All of the $L_\infty$-algebras in the present paper is the weakly filtered ones, so we do not mention this in the sequel.
\end{rmk}

A curved $L_\infty$-algebra can be obtained from a curved V-data. Let us recall the notion of curved V-data.
\begin{defi}
A {\bf curved V-data} is a quadruple $(L,\h,P,\Delta)$, where
\begin{itemize}
\item $(L,[\cdot,\cdot])$ is a graded Lie algebra;
\item $\h$ is an abelian subalgebra of $(L,[\cdot,\cdot])$;
\item $P:L\longrightarrow L$ is a projection, i.e., $P^2=P$, whose imagine is $\h$ and whose kernel is a subalgebra $(L,[\cdot,\cdot])$;
\item $\Delta\in L$ is a degree 1 element such that $[\Delta,\Delta]=0$.
\end{itemize}
\end{defi}

\begin{pro}\label{curved}
Let $(L,\h,P,\Delta)$ be a curved V-data. Then $(\h,\{l_k\}_{k=0}^\infty)$ is a curved $L_\infty$-algebra, where
$$l_0=P(\Delta), \qquad l_k(x_1,\cdots,x_k)=P\Big([\cdots[[\Delta,x_1],x_2],\cdots,x_n]\Big).$$
\end{pro}

Now we are ready to construct a curved $L_\infty$-algebra, whose Maurer-Cartan elements are precisely the $\huaD$-maps of a quasi-twilled Lie triple system.
\begin{thm}\label{controll}
Let $(\huaG,\g_1,\g_2)$ be a quasi-twilled Lie triple system. Then $(C^*(\g_1,\g_2),\{l_k\}_{k=0}^\infty)$
is a curved $L_\infty$-algebra, where the nonzero brackets are given to be
\begin{eqnarray*}
l_0&=&\phi_1,\\
~l_1(f)&=&[\hat\mu_1,\hat f]_{\mathsf{LTS}},\\
l_3(f,g,h)&=&[[[\hat\mu_2,\hat f]_{\mathsf{LTS}},\hat g]_{\mathsf{LTS}},\hat h]_{\mathsf{LTS}}.
\end{eqnarray*}
Furthermore, a linear map $D:\g_1\longrightarrow\g_2$ is a $\huaD$-map if and only if $D$ is a Maurer-Cartan element of the curved $L_\infty$-algebra $(C^*(\g_1,\g_2),\{l_k\}_{k=0}^\infty)$.
\end{thm}
\begin{proof}
Let $(\huaG,\g_1,\g_2)$ be a quasi-twilled Lie triple system with its structure $\Theta$, then there exists a curved V-data given as follows:
\begin{itemize}
\item $(C^*(\huaG,\huaG),[\cdot,\cdot]_{\mathsf{LTS}})$  is a graded Lie algebra;
\item $C^*(\g_1,\g_2)$ is an abelian subalgebra of $(C^*(\huaG,\huaG),[\cdot,\cdot]_{\mathsf{LTS}})$;
\item $P:C^*(\huaG,\huaG)\longrightarrow C^*(\huaG,\huaG)$ is a projection on to $C^*(\g_1,\g_2)$;
\item $\Delta=\Theta$.
\end{itemize}
Then by Proposition \ref{curved}, we obtain a curved $L_\infty$-algebra $(C^*(\g_1,\g_2),\{l_k\}_{k=0}^\infty)$, where
\begin{eqnarray*}
l_0&=&P(\Delta)=\phi_1,\\
l_1(f)&=&P\Big([\Theta,\hat f]_{\mathsf{LTS}}\Big)=[\hat\mu_1,\hat f]_{\mathsf{LTS}},\\
l_2(f,g)&=&\Big([[\Theta,\hat f]_{\mathsf{LTS}},\hat g]_{\mathsf{LTS}}\Big)=0,\\
l_3(f,g,h)&=&\Big([[[\Theta,\hat f]_{\mathsf{LTS}},\hat g]_{\mathsf{LTS}},\hat h]_{\mathsf{LTS}}\Big)=[[[\hat\mu_2,\hat f]_{\mathsf{LTS}},\hat g]_{\mathsf{LTS}},\hat h]_{\mathsf{LTS}},\\
l_k&=&0,\quad k\geq 4.
\end{eqnarray*}
Here, $f\in C^p(\g_1,\g_2),~g\in C^q(\g_1,\g_2)$ and $h\in C^r(\g_1,\g_2)$. Moreover, we have
\begin{eqnarray*}
~ &&[[\hat\mu_2,\hat D]_{\mathsf{LTS}},\hat D]_{\mathsf{LTS}}(x,y,z)\\
~ &=&2\bigg(\hat\mu_2(D(x),D(y),z)+\hat\mu_2(D(x),y,D(z))+\hat\mu_2(x,D(y),D(z))\bigg)\\
~ &=&2\bigg(\Courant{D(x),D(y),z}_1+\Courant{x,D(y),D(z)}_1-\Courant{y,D(x),D(z)}_1\bigg).
\end{eqnarray*}
Thus, we obtain that
\begin{eqnarray*}
~ &&l_0(x,y,z)+l_1(D)(x,y,z)+\frac{1}{6}l_3(D,D,D)(x,y,z)\\
~ &=&\Courant{x,y,z}_2+\Courant{x,y,D(z)}_2+\Courant{D(x),y,z}_2-\Courant{D(y),x,z}_2-D(\Courant{x,y,z}_1)\\
~ &&+\Courant{D(x),D(y),D(z)}_2-\frac{1}{3}D\Big(\Courant{D(x),D(y),z}_1+\Courant{x,D(y),D(z)}_1-\Courant{y,D(x),D(z)}_1\Big).
\end{eqnarray*}
Thus $D$ is a Maurer-Cartan element of the curved $L_\infty$-algebra $(C^*(\g_1,\g_2),l_0,l_1,l_3)$ if and only if $D$ is a $\huaD$-map of $(\huaG,\g_1,\g_2)$. This finishes the proof.
\end{proof}

\begin{rmk}
When the quasi-twilled Lie triple systems are restricted to $(\g\ltimes_\rho \h,\g,\h),~(\g\ltimes_\rho V,\g,V)$, and $(\g\oplus\h,\g,\h)$ given in Example \ref{crossed}, Example \ref{semidirect}, and Example \ref{direct} respectively, one obtains the corresponding curved $L_\infty$-algebras, whose Maurer-Cartan elements are presely modified $r$-matrices, crossed homomorphisms, derivations, and homomorphisms on Lie triple systems respectively from Theorem \ref{controll}.
\end{rmk}

Let $D:\g_1\longrightarrow\g_2$ be a $\huaD$-map of $(\huaG,\g_1,\g_2)$, then $D$ is a Maurer-Cartan element on the curved $L_\infty$-algebra $(C^*(\g_1,\g_2),l_0,l_1,l_3)$. Consequently, we obtain a twisted $L_\infty$-algebra structure on $C^*(\g_1,\g_2)$ as follows:
\begin{eqnarray*}
l_1^D(f)&=&l_1(f)+\frac{1}{2}l_3(D,D,f),\\
l_2^D(f,g)&=&l_3(D,f,g),\\
l_3^D(f,g,h)&=&l_3(f,g,h),\\
l_k&=&0,\quad k\geq 4,
\end{eqnarray*}
for all $f\in C^p(\g_1,\g_2), ~g\in C^q(\g_1,\g_2)$ and $h\in C^r(\g_1,\g_2)$. Now we are ready to construct an $L_\infty$-algebra that controls deformations of $\huaD$-map.

\begin{thm}\label{tttwsited}
Let $D:\g_1\longrightarrow\g_2$ be a $\huaD$-map of $(\huaG,\g_1,\g_2)$ and $D':\g_1\longrightarrow\g_2$ a linear map. Then $D+D'$ is still a $\huaD$-map if and only if $D'$ is a Maurer-Cartan element of the twisted $L_\infty$-algebra  $(C^*(\g_1,\g_2),l_1^D,l_2^D,l_3^D)$, i.e., $D'$ satisfies the following Maurer-Cartan equation:
$$l_1^D(D')+\frac{1}{2}l_2^D(D',D')+\frac{1}{6}l_3^D(D',D',D')=0.$$
\end{thm}
\begin{proof}
It is direct to deduce that
\begin{eqnarray*}
~ && l_0+l_1(D+D')+\frac{1}{6}l_3(D+D',D+D',D+D')\\
~ &=&l_0+l_1(D)+l_1(D')+\frac{1}{6}\Big(l_3(D,D,D)+3l_3(D,D,D')+3l_3(D,D',D')+l_3(D',D',D')\Big)\\
~ &=&l_1(D')+\half l_3(D,D,D')+\half l_3(D,D',D')+\frac{1}{6}l_3(D',D',D')\\
~ &=&l_1^D(D')+\half l_2^D(D',D')+\frac{1}{6}l_3^D(D',D',D').
\end{eqnarray*}
The second equality holds since $D$ is a Maurer-Cartan element. Thus $D'$ is a Maurer-Cartan element of the twisted $L_\infty$-algebra  $(C^*(\g_1,\g_2),l_1^D,l_2^D,l_3^D)$ if and only if $D+D'$ is a Maurer-Cartan element of the curved $L_\infty$-algebra $(C^*(\g_1,\g_2),l_0,l_1,l_3)$, which implies that $D+D'$ is a $\huaD$-map.
\end{proof}

\begin{rmk}
When the quasi-twilled Lie triple systems are restricted to $(\g\ltimes_\rho \h,\g,\g),~(\g\ltimes_\rho V,\g,V)$, and $(\g\oplus\h,\g,\h)$ given in Example \ref{crossed}, Example \ref{semidirect}, and Example \ref{direct} respectively, we gain corresponding $L_\infty$-algebra that controls deformations of crossed homomorphisms of any weight, derivations, and homomorphisms on Lie triple systems by applying Theorem \ref{tttwsited}.
\end{rmk}
}

\subsection{Cohomology of deformation maps of type I}
In this subsection, we establish a cohomology of $\huaD$-maps, which unifies the cohomologies of crossed homomorphisms of any weight, derivations and homomorphisms on Lie triple systems.
Before this, we should construct a 0-cocycle first.
\begin{pro}\label{1-cocy}
Let $(\huaG,\g_1,\g_2)$ be a quasi-twilled Lie triple system and $D:\g_1\longrightarrow\g_2$ a $\huaD$-map. Define a map
$$\delta:\wedge^2\g_1\longrightarrow\Hom(\g_1,\g_2)$$
to be
$$\delta(x,y)z=\Courant{D(x),y,z}_2-\Courant{D(y),x,z}_2+\Courant{D(x),D(y),D(z)}_2+\Courant{x,y,z}_2,\quad \forall x,y,z\in \g_1.$$
Then $\delta(x,y)$ is a $1$-cocycle of the Lie triple system $(\g_1,\Courant{\cdot,\cdot,\cdot}_D)$ associated to the representation $(\g_2;\rho_D)$ constructed in Proposition \ref{base}.
\end{pro}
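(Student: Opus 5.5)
The plan is to read the $1$-cocycle condition as a \emph{derivation-type} identity and deduce it from the fundamental identity \eqref{fundamental} of the twisted Lie triple system $(\huaG,\Theta^D)$. For a $1$-cochain $f\in\Hom(\g_1,\g_2)$ of $(\g_1,\Courant{\cdot,\cdot,\cdot}_D)$ with coefficients in $(\g_2;\rho_D)$, the coboundary formula gives
$$(\de f)(x_1,y_1,z)=\rho_D(y_1,z)f(x_1)-\rho_D(x_1,z)f(y_1)+D_{\rho_D}(x_1,y_1)f(z)-f(\Courant{x_1,y_1,z}_D).$$
So we must show that this expression vanishes for $f=\delta(x,y)$ and all $x,y,x_1,y_1,z\in\g_1$.

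\textbf{Step 1 (rewrite $\delta$).} Since $D$ is a $\huaD$-map, the defining identity gives
$$\delta(x,y)z=D\Courant{x,y,z}_D-\Courant{x,y,D(z)}_2 .$$
I would then use Proposition \ref{base} and the explicit form of $\hat\mu_1^D$ in \eqref{strict-quasi} to express $\Courant{x,y,D(z)}_2$ through $D_{\rho_D}(x,y)D(z)$ plus terms of the form $D(\cdots)$. The aim is to rewrite $\delta(x,y)$ as a commutator-type expression
$$\delta(x,y)=D\circ\frkL^D_{x,y}-\widetilde{\frkL}_{x,y}\circ D ,$$
where $\frkL^D_{x,y}=\Courant{x,y,\cdot}_D$ and $\widetilde{\frkL}_{x,y}$ is the $\g_2$-part of the left multiplication by $(x,0)$, $(y,0)$ in $\Theta$. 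Conceptually, $\delta(x,y)$ measures the failure of $\exp(\hat D)$ to intertwine the left multiplications $\Courant{(x,0),(y,0),\cdot}$ in $\Theta^D$ and in $\Theta$. This is the Lie triple analogue of the $0$-cochain coboundary in the Lie algebra case of \cite{JST}.

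\textbf{Step 2 (derivation argument).} By Lemma \ref{TTTtwisting}, $(\huaG,\Theta^D)$ is a Lie triple system. Because $\hat\phi_1^D=0$, it is the double of the matched pair of Proposition \ref{base}, so its $\g_1$-to-$\g_2$ mixed part is given by $\rho_D$. Hence the left multiplication by $(x,0),(y,0)$ in $\Theta^D$ is a derivation of $\Theta^D$. The same holds for $\Theta$, and $\exp(\hat D)$ is an isomorphism $\Theta^D\to\Theta$. I would then evaluate the fundamental identity \eqref{fundamental} in $\Theta^D$ on the triples $(x,0),(y,0)$ acting on $\Courant{(x_1,0),(y_1,0),(z,0)}$, and the same identity in $\Theta$ on the images under $\exp(\hat D)$, i.e.\ on $(x_i,D(x_i))$. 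Projecting onto $\g_2$ and subtracting the two identities should produce exactly the four terms of $(\de\,\delta(x,y))(x_1,y_1,z)$. The terms not involving $\delta$ should cancel, either by the $\huaD$-map identity or by the matched-pair identities of Proposition \ref{base}.

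\textbf{Main obstacle.} The hard step is the bookkeeping in Step 2. One must check that the $\g_2$-components coming from $\hat\mu_2$ (terms with two or three $D$'s) reorganize into $\rho_D$ and $D_{\rho_D}$ exactly, including the correction term $-D(\Courant{u,D(x),y}_1+\Courant{u,y,D(x)}_1)$ in $\rho_D$. If the conceptual route gets tangled, my fallback is brute force. I would expand $(\de\,\delta(x,y))(x_1,y_1,z)$ directly and sort the resulting terms by the number of occurrences of $D$ (zero to four). Each group should then cancel using one specific identity: \eqref{fundamental} and \eqref{LY1} in $\huaG$ applied to the appropriate mixed triples, together with the $\huaD$-map identity to replace $D\Courant{\cdot,\cdot,\cdot}_D$.
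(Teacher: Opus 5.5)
Your Step 1 is fine: the identity $\delta(x,y)=D\circ\frkL^D_{x,y}-\widetilde{\frkL}_{x,y}\circ D$ is immediate from the $\huaD$-map equation. Step 2, however, cannot be completed, because the statement is false for general quasi-twilled Lie triple systems. The paper's ``direct computation'' breaks at the same point, where it asserts that $\delta(x,y)\Courant{x_1,x_2,x_3}_D$ vanishes.

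Here is a counterexample. In Example \ref{twisted} take $\rho=0$, a nonzero linear map $D:\g\to V$, and $\omega:=D\circ\Courant{\cdot,\cdot,\cdot}_\g$. The map $(x,u)\mapsto(x,u+D(x))$ identifies the direct product of $\g$ with the abelian $V$ with $\g\ltimes_{0,\omega}V$, so this is a strict quasi-twilled system. Then $D$ is a $\huaD$-map, $\Courant{\cdot,\cdot,\cdot}_D=\Courant{\cdot,\cdot,\cdot}_\g$, $\rho_D=0$ and $\delta(x,y)=D\circ\frkL_{x,y}$. Hence $(\de\,\delta(x,y))(a,b,c)=-D\Courant{x,y,\Courant{a,b,c}_\g}_\g$. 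For $\g=\mathfrak{sl}_2$ with $\Courant{x,y,z}_\g=[[x,y],z]$, $x=a=c=E$ and $y=b=F$, this equals $-4D(E)$, which is nonzero whenever $D(E)\neq0$.

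Your sketch relies on the fact that $f\mapsto D_{\rho_D}(x,y)\circ f-f\circ\frkL^D_{x,y}$ commutes with $\de$. That mechanism makes $D\circ\frkL^D_{x,y}-\widetilde{\frkL}_{x,y}\circ D$ a cocycle only if $\widetilde{\frkL}_{x,y}=D_{\rho_D}(x,y)$ and $D$ is itself a $1$-cocycle for $(\g_1,\Courant{\cdot,\cdot,\cdot}_D;\rho_D)$. Neither holds in general; for instance, in Example \ref{twisted} one has $\de D=-\omega$. Comparing the fundamental identity of $\Theta^D$ with that of $\Theta$ on $\exp(\hat D)$-images also gives nothing. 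Since $\exp(\hat D)$ is an isomorphism, one identity is just the image of the other.

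What the derivation idea does prove is the following. Let $p(a,u)=u-D(a)$ be the projection of $\huaG$ onto $\g_2$ along $Gr(D)$, so that $\rho_D(b,c)u=p\,\Theta\big((0,u),(b,D(b)),(c,D(c))\big)$; this is the intended $\rho_D$. Let $\mathcal{E}$ be any derivation of $(\huaG,\Theta)$, and write $\bar a=(a,D(a))$. Apply $p$ to $\mathcal{E}\,\Theta(\bar a,\bar b,\bar c)$ and use that $Gr(D)$ is a subalgebra. This shows that $z\mapsto p\,\mathcal{E}(z,D(z))$ is a $1$-cocycle.

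This is what you obtain if you keep $(x,0),(y,0)$ in $\Theta$ and move only $x_1,y_1,z$ onto the graph. With $\mathcal{E}=\Courant{(x,0),(y,0),\cdot}_\huaG$, the negative of $p\,\mathcal{E}(z,D(z))$ is the cocycle $z\mapsto D\Courant{x,y,z}_1-\Courant{x,y,z}_2-\Courant{x,y,D(z)}_2$. It differs from the stated $\delta(x,y)$ by $z\mapsto\Courant{x,y,z}_2+D\big(\Courant{D(x),D(y),z}_1+\Courant{x,D(y),D(z)}_1-\Courant{y,D(x),D(z)}_1\big)$. This correction vanishes for the action, semidirect-product and direct-product systems, so the paper's three examples survive. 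It does not vanish in general, so the $0$-cochain map must be redefined, or the class of systems restricted, before any proof can go through.

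A smaller point: $\Theta^D$ is not strict in general, since it contains $[\hat\mu_2,\hat D]_{\mathsf{LTS}}$, so it is not a matched-pair double. Nevertheless $\rho_D$ is still a representation, namely the quotient action of the subalgebra $Gr(D)$ on $\huaG/Gr(D)\cong\g_2$.
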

\begin{proof}
For all $x,y,x_1,x_2,x_3\in \g_1$, by a direct computation, we have
{\footnotesize
\begin{eqnarray*}
-\Courant{\delta(x,y)x_1,x_2,x_3}_2+\Courant{\delta(x,y)x_2,x_1,x_3}_2+\Courant{x_1,x_2,\delta(x,y)x_3}_2&=&0,\\
-\Courant{\delta(x,y)x_1,D(x_2),D(x_3)}_2+\Courant{\delta(x,y)x_2,D(x_1),D(x_3)}_2+\Courant{D(x_1),D(x_2),\delta(x,y)x_3}_2&=&0,\\
\Courant{\delta(x,y)x_1,D(x_2),x_3}_1-\Courant{\delta(x,y)x_2,D(x_1),x_3}_1-\Courant{\delta(x,y)x_3,D(x_2),x_1}_1+\Courant{\delta(x,y)x_3,D(x_1),x_2}_1&=&0,\\
\Courant{\delta(x,y)x_1,x_3,D(x_2)}_1-\Courant{\delta(x,y)x_2,x_3,D(x_1)}_1-\Courant{\delta(x,y)x_3,x_1,D(x_2)}_1+\Courant{\delta(x,y)x_3,D(x_2),x_1}_1&=&0,
\end{eqnarray*}}
and since $D$ is a $\huaD$-map, the following equality vanishes:
$$\delta(x,y)(\Courant{x_1,x_2,x_3}_1+\Courant{D(x_1),D(x_2),x_3}_1+c.p.).$$
Thus , we have
\begin{eqnarray*}
&&\de(\delta(x,y))(x_1,x_2,x_3)\\
&=&-\Big(\rho_D(x_2,x_3)\delta(x,y)x_1-\rho_D(x_1,x_3)\delta(x,y)x_2\Big)+D_{\rho_D}(x_1,x_2)\delta(x,y)x_3-\delta(x,y)\Courant{x_1,x_2,x_3}_D\\
~ &=&0
\end{eqnarray*}
This finishes the proof.
\end{proof}

Consequently, we obtain the cohomology of $\huaD$-map of a quasi-twilled Lie triple system. Let $(\huaG,\g_1,\g_2)$ be a quasi-twilled Lie triple system and $D:\g_1\longrightarrow\g_2$ a $\huaD$-map.
Define the space of $n$-cochains $C^n(D)$ to be
\begin{eqnarray*}
C^n(D):=
\begin{cases}
\wedge^2\g_1,& n=0,\\
C^{n}_{\mathsf{Ymg}}(\g_1;\g_2),& n\geq 1.
\end{cases}
\end{eqnarray*}

Define the coboundary map $\delta_D:C^n(D)\longrightarrow C^{n+1}(D)$ to be
\begin{eqnarray*}
\delta_D:=
\begin{cases}
\delta, \quad n=0,\\
\de, \quad n\geq 1.
\end{cases}
\end{eqnarray*}
If $n\geq 1$, the corresponding cochain complex $(\oplus_{n=1}^{+\infty}C^n_{\mathsf{Ymg}}(\g_1;\g_2),\de)$ is that of Lie triple system $(\g_1,\Courant{\cdot,\cdot,\cdot}_D)$ associated to the representation $(\g_2;\rho_D)$, where $\Courant{\cdot,\cdot,\cdot}_D$ and $\rho_D$ are given in Proposition \ref{base}.

From Proposition \ref{1-cocy} and cohomology of Lie triple systems, we obtain the cochian complex $(C^*(D):=\oplus_{n=0}^{+\infty}C^n(D),\delta_D)$, which is defined to be the {\bf cohomology of the $\huaD$-map $D$}. Denote by $Z^n(D)$ the set of $n$-cocycles and by $B^n(D)$ the set of $n$-coboundaries. The $n$-th cohomology group is denoted by
$$H^n(D):=Z^n(D)/B^n(D), \qquad n\geqslant 0.$$

\emptycomment{
In the sequel, we give an intrinsic interpretation of the above coboundary operator.
\begin{pro}
Let $(\huaG,\g_1,\g_2)$ be a quasi-twilled Lie triple system and $D:\g_1\longrightarrow\g_2$ a $\huaD$-map. Then we have
$$l_1^D(f)=(-1)^{k-1}\delta_D(f),$$
where $l_1^D$ is the differential of the twisted $L_\infty$-algebra $(C^*(\g_1,\g_2),l_1^D,l_2^D,l_3^D)$.
\end{pro}
\begin{proof}
For all $f\in C^n(\g_1,\g_2)$, we have
\begin{eqnarray*}
l_1^D(f)&=&l_1(f)+\frac{1}{2}l_3(D,D,f)\\
~ &=&[\hat\mu_1,\hat f]_{\mathsf{LTS}}+\frac{1}{2}[[[\hat\mu_2,\hat D]_{\mathsf{LTS}},\hat D]_{\mathsf{LTS}},\hat f]_{\mathsf{LTS}}\\
~ &=&[\hat\mu_1+\frac{1}{2}[[\hat\mu_2,\hat D]_{\mathsf{LTS}},\hat D]_{\mathsf{LTS}},\hat f]_{\mathsf{LTS}}\\
~ &=&[\hat\mu_1+\hat\mu_2,\hat f]_{\mathsf{LTS}}\\
~ &=&(-1)^{k-1}\de_D(f).
\end{eqnarray*}
This finishes the proof.
\end{proof}
}

In particular, if we restrict the $\huaD$-maps to the cases in Examples \ref{crossed}-\ref{direct} on their corresponding quasi-twilled Lie triple sytems, we obtain cohomologies of crossed homomorphisms of any weight, derivations and homomorphisms on Lie triple systems.

\emptycomment{
\begin{ex}
Consider the quasi-twilled Lie triple system $(\g\ltimes_\rho V,\g,V)$ given in Example \ref{semidirect}. Let $D:\g\longrightarrow V$ be a derivation of from Lie triple system $(\g,\Courant{\cdot,\cdot,\cdot})$ to the $\g$-module $V$.
Define
$$\delta:\wedge^2\g\longrightarrow\Hom(\g,\g)$$
to be
$$\delta(x,y)z:=\Courant{D(x),D(y),D(z)}_\g+\lambda\Courant{x,y,z}_\g.$$
Then the cohomology of modified $r$-matrix is the cochain complex $(C^*(D),\delta)$, where
$$C^n(D)=
\begin{cases}
\wedge^2\g,& n=0,\\
C^{n-1}(\g;\g),& n\geq 1,
\end{cases}$$
and
$$\delta=
\begin{cases}
\delta,& n=0,\\
\de,& n\geq 1.
\end{cases}$$
When $n\geq 1$, the corresponding cochain complex $(C^*(\g,\g),\de)$ is just the Yamaguti cohomology.
\end{ex}
}

\begin{ex}
Consider the quasi-twilled Lie triple system $(\g\ltimes_\rho\h,\g,\h)$ given in Example \ref{crossedd}. Let $D:\g\longrightarrow\h$ be a crossed homomorphism of weight $\lambda$. Then the Lie triple system structure $\Courant{\cdot,\cdot,\cdot}_D:=\Courant{\cdot,\cdot,\cdot}_\g$ and the representation of the Lie triple system $(\g,\Courant{\cdot,\cdot,\cdot})$ on $\h$ is given by
$$\rho_D(x,y)u:=\rho(x,y)u+\lambda\Courant{u,D(x),D(y)}_\h.$$

Define
$$\delta:\wedge^2\g\longrightarrow\Hom(\g,\h)$$
to be
$$\delta(x,y)z:=\rho(y,z)D(x)-\rho(x,z)D(y).$$
Then the cohomology of crossed homomorphism of weight $\lambda$ is the cochain complex $(C^*(D),\delta)$, where
$$C^n(D)=
\begin{cases}
\wedge^2\g,& n=0,\\
C^{n}_{\mathsf{Ymg}}(\g;\h),& n\geq 1,
\end{cases}$$
and
$$\delta=
\begin{cases}
\delta,& n=0,\\
\de,& n\geq 1.
\end{cases}$$
\end{ex}

\begin{ex}
Consider the quasi-twilled Lie triple system $(\g\ltimes_\rho V,\g,V)$ given in Example \ref{semidirectt}. Let $D:\g\longrightarrow V$ be a derivation of from Lie triple system $(\g,\Courant{\cdot,\cdot,\cdot})$ to the $\g$-module $V$. In this case, Then the Lie triple system structure $\Courant{\cdot,\cdot,\cdot}_D:=\Courant{\cdot,\cdot,\cdot}_\g$ and the representation of Lie triple system $(\g,\Courant{\cdot,\cdot,\cdot}_\g)$ on $\h$ is given by $\rho_D:=\rho$.
Define
$$\delta:\wedge^2\g\longrightarrow\Hom(\g,V)$$
to be
$$\delta(x,y)z:=\rho(y,z)D(x)-\rho(x,z)D(y).$$

Then the cohomology of a derivation $D$ from Lie triple system $(\g,\Courant{\cdot,\cdot,\cdot})$ to the $\g$-module $V$ is the cochain complex $(C^*(D),\delta)$, where
$$C^n(D)=
\begin{cases}
\wedge^2\g,& n=0,\\
C^{n}_{\mathsf{Ymg}}(\g;V),& n\geq 1,
\end{cases}$$
and
$$\delta=
\begin{cases}
\delta,& n=0,\\
\de,& n\geq 1.
\end{cases}$$
\end{ex}

\begin{ex}
Consider the quasi-twilled Lie triple system $(\g\oplus\h,\g,\h)$ given in Example \ref{directt}. Let $D:\g\longrightarrow\h$ be a homomorphism. Then the Lie triple system structure $\Courant{\cdot,\cdot,\cdot}_D:=\Courant{\cdot,\cdot,\cdot}_\g$ and the representation of Lie triple system $(\g,\Courant{\cdot,\cdot,\cdot}_\g)$ on $\h$ is given by
$$\rho_D(x,y)u:=\Courant{u,D(x),D(y)}_\h,\quad \forall x,y\in \g,~u\in \h.$$
Define
$$\delta:\wedge^2\g\longrightarrow\Hom(\g,\h)$$
to be
$$\delta(x,y)z:=\Courant{D(x),D(y),D(z)}_\h.$$

Then the cohomology of a homomorphism is the cochain complex $(C^*(D),\delta)$, where
$$C^n(D)=
\begin{cases}
\wedge^2\g,& n=0,\\
C^{n}_{\mathsf{Ymg}}(\g;\h),& n\geq 1,
\end{cases}$$
and
$$\delta=
\begin{cases}
\delta,& n=0,\\
\de,& n\geq 1.
\end{cases}$$
\end{ex}

\section{Cohomology of deformation maps of type II}
In this section, we still focus on the quasi-twilled Lie triple system $(\huaG,\g_1,\g_2)$ whose structure has the form $\Theta:=\hat\phi_1+\hat\mu_1+\hat\mu_2$, where $\hat\phi_1,\hat\mu_1,\hat\mu_2$ are given in \eqref{equi}. More explicitly,
\begin{eqnarray*}
\left\{\begin{array}{rcl}
~~\hat\phi_1((x,u),(y,v),(z,w))&=&(0,\Courant{x,y,z}_2),\\
~~\hat\mu_1((x,u),(y,v),(z,w))&=&(\Courant{x,y,z}_1,\Courant{x,y,w}_2+\Courant{u,y,z}_2-\Courant{v,x,z}_2),\\
~~\hat\mu_2((x,u),(y,v),(z,w))&=&\Big(\Courant{u,v,z}_1+\Courant{x,v,w}_1-\Courant{y,u,w}_1,\Courant{u,v,w}_2\Big).
\end{array}\right.
\end{eqnarray*}

\subsection{Deformation maps of type II of a quasi-twilled Lie triple system}
In this subsection, we introduce the notion of deformation maps of type II on a quasi-twilled Lie triple system, which unifies relative Rota-Baxter operators (of any weight), twisted Rota-Baxter operators and Reynolds operators on Lie triple systems, and deformation maps of matched pairs of Lie triple systems.

\begin{defi}
Let the triple $(\huaG,\g_1,\g_2)$ be a quasi-twilled Lie triple system. A {\bf deformation map of type II} ($\frkD$-map for short) of $(\huaG,\g_1,\g_2)$ is a linear map $B:\g_2\longrightarrow\g_1$ such that for all $u,v,w\in \g_2$,
\begin{eqnarray*}
~ &&\Courant{B(u),B(v),B(w)}_1+\Courant{u,v,B(w)}_1+\Courant{B(u),v,w}_1-\Courant{B(v),u,w}_1\\
~ &=&B\Big(\Courant{B(u),B(v),w}_2+\Courant{u,B(v),B(w)}_2-\Courant{v,B(u),B(w)}_2+\Courant{B(u),B(v),B(w)}_2+\Courant{u,v,w}_2\Big).
\end{eqnarray*}
\end{defi}

These two types of deformation maps are related in the following proposition.
\begin{pro}
Let $D:\g_1\longrightarrow\g_2$ be an invertible map. Then $D$ is a $\huaD$-map if and only if $D^{-1}:\g_2\longrightarrow\g_1$ is a $\frkD$-map.
\end{pro}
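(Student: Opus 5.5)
The proof is a direct change of variables. It uses only the bijectivity of $D$ and the two defining identities; no cohomological machinery is needed. Put $B:=D^{-1}:\g_2\longrightarrow\g_1$. Since $D$ is a linear isomorphism, as $x,y,z$ range over $\g_1$ the elements $u:=D(x)$, $v:=D(y)$, $w:=D(z)$ range over all of $\g_2$, and conversely $x=B(u)$, $y=B(v)$, $z=B(w)$. So the identity defining a $\huaD$-map, quantified over all $x,y,z\in\g_1$, is equivalent to the same identity rewritten in terms of $u,v,w$ and quantified over all $u,v,w\in\g_2$.

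Key steps, in order. First, since $B$ is injective and $B\circ D=\Id_{\g_1}$, the $\huaD$-map identity $D(L)=R$ is equivalent to $L=B(R)$. Here
$$L=\Courant{x,y,z}_1+\Courant{D(x),D(y),z}_1+\Courant{x,D(y),D(z)}_1-\Courant{y,D(x),D(z)}_1,$$
and $R$ denotes its right-hand side. Second, substitute $x=B(u)$, $y=B(v)$, $z=B(w)$ and $D(x)=u$, $D(y)=v$, $D(z)=w$. The left side $L$ becomes
$$\Courant{B(u),B(v),B(w)}_1+\Courant{u,v,B(w)}_1+\Courant{B(u),v,w}_1-\Courant{B(v),u,w}_1 .$$
The argument $R$ becomes
$$\Courant{B(u),B(v),B(w)}_2+\Courant{B(u),B(v),w}_2+\Courant{u,B(v),B(w)}_2-\Courant{v,B(u),B(w)}_2+\Courant{u,v,w}_2 .$$
Third, compare term by term with the definition of a $\frkD$-map: the equation $L=B(R)$ is exactly the defining identity of a $\frkD$-map for $B$, with the summands inside $B(\cdot)$ merely listed in a different order. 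Finally, by the bijectivity of $D$ noted above, "for all $x,y,z\in\g_1$" is equivalent to "for all $u,v,w\in\g_2$", which gives both implications at once.

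There is no real obstacle. The only care needed is the bookkeeping in the second step: one must match each of the eight bracket terms with its counterpart in the $\frkD$-map definition, checking the argument positions and signs (for instance, that $-\Courant{D(y),x,z}_2$ becomes $-\Courant{v,B(u),B(w)}_2$). As a sanity check, one can also observe that $Gr(D)=\{(B(u),u)\mid u\in\g_2\}$. By Proposition \ref{graph}, both conditions then express that this same subspace is closed under $\Theta$.
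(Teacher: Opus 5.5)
The paper states this proposition without proof, so there is no argument of theirs to compare with. Your change of variables $x=B(u)$, $y=B(v)$, $z=B(w)$ is correct and complete, and it is the natural verification. The four $\Courant{\cdot,\cdot,\cdot}_1$-terms inside $D(\cdot)$ become exactly the left-hand side of the $\frkD$-condition. The five $\Courant{\cdot,\cdot,\cdot}_2$-terms become exactly the expression to which $B$ is applied. The equivalence $D(L)=R \Leftrightarrow L=B(R)$, together with the bijection $\g_1\to\g_2$, handles the quantifiers in both directions.

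One small caution concerns your closing sanity check. Proposition \ref{graph} only characterizes the $\huaD$-side. It is also stated for twilled systems, although its proof works verbatim in the quasi-twilled case. To turn the graph remark into a genuine second proof, you would also need the analogous fact that $B$ is a $\frkD$-map if and only if $\{(B(u),u)\mid u\in\g_2\}$ is closed under $\Theta$. The paper does not state this. It does follow from the same one-line computation: the $\g_1$- and $\g_2$-components of $\Theta\big((B(u),u),(B(v),v),(B(w),w)\big)$ are precisely the left-hand side of the $\frkD$-condition and the argument of $B$ on its right-hand side.
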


Then we provide several $\frkD$-maps on special quasi-twilled Lie triple systems, which are our known operators on Lie triple systems.
\begin{ex}\label{crosseddd}
Consider the quasi-twilled Lie triple system $(\g\ltimes_\rho\h,\g,\h)$ given in Example \ref{crossedd}. A $\frkD$-map of $(\g\ltimes_\rho\h,\g,\h)$ is a linear map $T:\h\longrightarrow\g$ such that
$$\Courant{T(u),T(v),T(w)}_\g=T\Big(D_\rho(T(u),T(v))w+\rho(T(v),T(w))u-\rho(T(u),T(w))v+\lambda\Courant{u,v,w}_\h\Big),~~ \forall u,v,w\in \h,$$
which is exactly a {\bf relative Rota-Baxter operator of weight $\lambda$} on Lie triple system $(\g,\Courant{\cdot,\cdot,\cdot}_\g)$ with respect to an action $\rho$.
\end{ex}

\begin{ex}\label{semidirecttt}
Consider the quasi-twilled Lie triple system $(\g\ltimes_\rho V,\g,V)$ given in Example \ref{semidirectt}. A $\frkD$-map of $(\g\ltimes_\rho V,\g,V)$ is a linear map $T:V\longrightarrow\g$ such that
$$\Courant{T(u),T(v),T(w)}=T\Big(D_\rho(T(u),T(v))w+\rho(T(v),T(w))u-\rho(T(u),T(w))v\Big),\quad \forall u,v,w\in V,$$
which is exactly a {\bf relative Rota-Baxter operator} on $\g$ with respect to the representation $(V;\rho)$.
\end{ex}

\begin{ex}\label{twistedd}
Consider the quasi-twilled Lie triple system $(\g\ltimes_{\rho,\omega}V,\g,V)$ given in Example \ref{twisted}. A $\frkD$-map of $(\g\ltimes_{\rho,\omega}V,\g,V)$ is a linear map $T:V\longrightarrow\g$ such that
$$\Courant{T(u),T(v),T(w)}=T\Big(D_\rho(T(u),T(v))w+\rho(T(v),T(w))u-\rho(T(u),T(w))v+\omega(T(u),T(v),T(w))\Big),$$
for any $u,v,w\in V$, which is exactly a {\bf twisted Rota-Baxter operator}.
\end{ex}

\begin{ex}\label{Reyy}
Consider the quasi-twilled Lie triple system $(\g\ltimes_{\frkR,\omega}\g,\g,\g)$ given in Example \ref{Rey}. A $\frkD$-map of $(\g\ltimes_{\rho,\omega}\g,\g,\g)$ is a linear map $R:\g\longrightarrow\g$ such that
$$\Courant{R(u),R(v),R(w)}=R\Big(D_\rho(R(u),R(v))w+\rho(R(v),R(w))u-\rho(R(u),R(w))v+\Courant{R(u),R(v),R(w)}\Big),$$
for any $u,v,w\in \g$, which is exactly a {\bf Reynolds operator}.
\end{ex}

\begin{ex}\label{deforr}
Let $(\g_1,\g_2;\rho_1,\rho_2)$ be a matched pair of Lie triple system. Consider the quasi-twilled Lie triple system $(\g_1\bowtie\g_2,\g_1,\g_2)$ given in Example \ref{defor}. A $\frkD$-map of $(\g_1\bowtie\g_2,\g_1,\g_2)$ is a linear map $B:\g_2\longrightarrow\g_1$ such that
\begin{eqnarray*}
~ &&\Courant{B(u),B(v),B(w)}_{\g_1}+D_2(u,v)B(w)+\rho_2(v,w)B(u)-\rho_2(u,w)B(v)\\
~ &=&B\Big(\Courant{u,v,w}_{\g_2}+D_1(B(u),B(v))w+\rho_1(B(v),B(w))u-\rho_1(B(u),B(w))v\Big),
\end{eqnarray*}
for all $u,v,w\in \g_2$. In this case, the $\frkD$-map $B$ is called a {\bf deformation map} of matched pair of Lie triple systems.
\end{ex}

In the sequel, we illustrate the roles that $\frkD$-maps play in the twisting theory. Let $B:\g_2\longrightarrow\g_1$ be a linear map. It follows that $\hat B\circ\hat B=0$ and that $[\cdot,\hat B]_{\mathsf{LTS}}$ is a derivation of the graded Lie algebra $(C^*(\huaG,\huaG),[\cdot,\cdot]_{\mathsf{LTS}})$.

\begin{defi}
The transformation $\Theta^B:=\exp(X_{\hat B})(\Theta)$ is a {\bf twisting} of $\Theta$ by $B$.
\end{defi}

Parallel to Lemma \ref{TTTtwisting}, we have the following lemma.

\begin{lem}
With the above notations, $\Theta^B=\exp(-\hat B)\circ\Theta\circ\Big(\exp(\hat B)\ot\exp(\hat B)\ot\exp(\hat B)\Big)$ is a Lie triple system structure on $\huaG$.
\end{lem}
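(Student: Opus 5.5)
The proof runs parallel to Lemma \ref{TTTtwisting}, using only two facts: $\hat B\circ\hat B=0$, and $[\cdot,\hat B]_{\mathsf{LTS}}$ is a derivation of the graded Lie algebra $(C^*_{\mathsf{LTS}}(\huaG,\huaG),[\cdot,\cdot]_{\mathsf{LTS}})$ (since $\hat B$ has degree $0$). Since $\hat B$ is nilpotent, $\exp(\hat B)=\Id+\hat B$ and $\exp(-\hat B)=\Id-\hat B$ are mutually inverse linear automorphisms of $\huaG$. Everything is then reduced to one structural claim: the series $\exp(X_{\hat B})(\Theta)$ terminates and equals the transported bracket
$$\Theta'(X,Y,Z):=\exp(-\hat B)\,\Theta\big(\exp(\hat B)X,\exp(\hat B)Y,\exp(\hat B)Z\big).$$
Once this identity is known, the Lie triple system property is immediate. $\Theta'$ is the bracket obtained by transporting $\Theta$ along the linear isomorphism $\exp(\hat B)$, so skew-symmetry in the first two arguments, the cyclic identity \eqref{LY1} and the fundamental identity \eqref{fundamental} all follow by applying $\exp(-\hat B)$ to the corresponding identities for $\Theta$ evaluated at $\exp(\hat B)X,\dots$. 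Equivalently, $\exp(\hat B):(\huaG,\Theta^B)\to(\huaG,\Theta)$ is an isomorphism.

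To prove the identity, I would compute $X_{\hat B}^k(\Theta)$ directly from the definition \eqref{control} of $\circ$ with $q=0$. For $P\in C^1$ and $Q=\hat B\in C^0$, formula \eqref{control} gives $P\circ\hat B$ as the sum of the three terms in which $\hat B$ is inserted into one slot of $P$. Also $\hat B\circ P=\hat B\cdot P$, the composition on the output. Hence
$$[\Theta,\hat B]_{\mathsf{LTS}}=\Theta\circ(\hat B\ot\Id\ot\Id+\Id\ot\hat B\ot\Id+\Id\ot\Id\ot\hat B)-\hat B\circ\Theta.$$
Iterating, $X^k_{\hat B}(\Theta)$ is a sum over distributions of $k$ copies of $\hat B$ among the three inputs and the output (with a sign $-1$ for each output copy), counted with multinomial multiplicities $k!/(a!\,b!\,c!\,d!)$. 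Dividing by $k!$ and summing over $k$ therefore factorizes as
$$\exp(-\hat B)\circ\Theta\circ\big(\exp(\hat B)\ot\exp(\hat B)\ot\exp(\hat B)\big),$$
which is the formal identity $e^{\mathrm{ad}}=\mathrm{Ad}_{e}$. Because $\hat B^2=0$, every term in which $\hat B$ appears twice in one slot vanishes. So $X^k_{\hat B}(\Theta)=0$ for $k\ge 5$, the series is a finite sum, and convergence is not an issue.

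The main obstacle is the bookkeeping in this middle step. One must check that iterated brackets produce exactly the multinomial coefficients, i.e.\ that the operators "insert $\hat B$ in slot $i$" ($i=1,2,3$) and "postcompose with $-\hat B$" commute, so that the binomial theorem applies. One must also confirm that no extra signs arise, which holds because $\hat B$ has degree $0$ and every sign $(-1)^{pq}$ in the Nambu bracket equals $1$. The LTS symmetry conditions on $C^1_{\mathsf{LTS}}$ are preserved at each stage because $[\cdot,\hat B]_{\mathsf{LTS}}$ maps the subalgebra into itself. Alternatively, one can simply cite the analogous computation in \cite{ZXQ} used for Lemma \ref{TTTtwisting}, which depends only on $\hat D^2=0$ and therefore applies verbatim with $\hat B$ in place of $\hat D$.
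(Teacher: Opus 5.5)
Your proposal is correct and follows the route the paper intends. The paper gives no separate argument here; it only points to the parallel Lemma \ref{TTTtwisting} from \cite{ZXQ}, whose content is the identity $\exp(X_{\hat B})(\Theta)=\exp(-\hat B)\circ\Theta\circ\big(\exp(\hat B)\ot\exp(\hat B)\ot\exp(\hat B)\big)$ followed by transport of structure along the isomorphism $\exp(\hat B)$. Your computation supplies the step the paper leaves to that citation: $[\cdot,\hat B]_{\mathsf{LTS}}$ acting on $C^1$ splits as four commuting operators (insertion of $\hat B$ into each input slot, and postcomposition with $-\hat B$), each of which squares to zero because $\hat B\circ\hat B=0$.
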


The following proposition shows us that $\frkD$-map plays a crucial role in making the twisting to be a Lie triple system structure.

\begin{pro}\label{twilled}
Let $((\huaG,\Theta),\g_1,\g_2)$ be a quasi-twilled Lie triple system. Then the linear map $B:\g_2\longrightarrow\g_1$ is a $\frkD$-map if and only if the triple $((\huaG,\Theta^B),\g_1,\g_2)$ is a (strict) quasi-twilled Lie triple system.
\end{pro}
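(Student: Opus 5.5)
We expand the twisting $\Theta^B$ via the explicit formula of the preceding lemma,
$$\Theta^B=\exp(-\hat B)\circ\Theta\circ\big(\exp(\hat B)\ot\exp(\hat B)\ot\exp(\hat B)\big),$$
and decompose it by bidegree as in \eqref{decompose}. By that lemma, $\Theta^B$ is already a Lie triple system structure on $\huaG$. Hence $((\huaG,\Theta^B),\g_1,\g_2)$ is a strict quasi-twilled Lie triple system if and only if two conditions hold: $\g_2$ is a subalgebra for $\Theta^B$, and the $\hat\psi$-type component of $\Theta^B$ vanishes. Since $\hat B$ maps $\g_2$ into $\g_1$ and kills $\g_1$, we have $\hat B\circ\hat B=0$, so $\exp(\hat B)=\Id+\hat B$ and $\exp(-\hat B)=\Id-\hat B$. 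Every component of $\Theta^B$ is therefore a finite expression in $\Theta$ and $B$.

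The first step is to compute $\Theta^B$ on three elements of $\g_2$. We have
$$\Theta^B\big((0,u),(0,v),(0,w)\big)=(\Id-\hat B)\,\Theta\big((B(u),u),(B(v),v),(B(w),w)\big).$$
Expand $\Theta=\hat\phi_1+\hat\mu_1+\hat\mu_2$ using \eqref{equi}. The $\g_1$-component of $\Theta\big((B(u),u),(B(v),v),(B(w),w)\big)$ is
$$\Courant{B(u),B(v),B(w)}_1+\Courant{u,v,B(w)}_1+\Courant{B(u),v,w}_1-\Courant{B(v),u,w}_1,$$
and its $\g_2$-component is
$$\Courant{B(u),B(v),B(w)}_2+\Courant{B(u),B(v),w}_2+\Courant{u,B(v),B(w)}_2-\Courant{v,B(u),B(w)}_2+\Courant{u,v,w}_2.$$
Applying $\Id-\hat B$ leaves the $\g_2$-component unchanged and subtracts $B$ of it from the $\g_1$-component. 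So the $\g_1$-part of $\Theta^B$ on $\g^{0,3}$ is exactly the left side minus the right side of the $\frkD$-map identity. Consequently, $\g_2$ is a subalgebra of $(\huaG,\Theta^B)$ if and only if $B$ is a $\frkD$-map. This already gives the direction ``quasi-twilled $\Rightarrow$ $\frkD$-map''.

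The second step handles the converse, and here lies the main obstacle. We must check that, when $B$ is a $\frkD$-map, $\Theta^B$ has no component of the shape of $\hat\psi$, i.e.\ no $\g_1$-valued part on $\g^{1,2}$ and no $\g_2$-valued part on $\g^{0,3}$. We track bidegrees: $\hat B$ lowers the $\g_2$-count of the input by one and raises the $\g_1$-count by one, while $\Theta$ has only the components $\hat\phi_1,\hat\mu_1,\hat\mu_2$. Writing $\Theta^B=\Theta+[\Theta,\hat B]_{\mathsf{LTS}}+\frac12[[\Theta,\hat B]_{\mathsf{LTS}},\hat B]_{\mathsf{LTS}}+\frac16[[[\Theta,\hat B]_{\mathsf{LTS}},\hat B]_{\mathsf{LTS}},\hat B]_{\mathsf{LTS}}$ (the series terminates), we would list which bidegree components each bracket produces.

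There is a subtlety: a naive count suggests a $\hat\psi$-type component could arise from $[\hat\mu_2,\hat B]_{\mathsf{LTS}}$ and $[[\hat\phi_1,\hat B],\hat B]$. We would compute these explicitly on arguments of type $(x,v,w)$ and its permutations, aiming to show that they cancel or reduce to the $\frkD$-map expression. If the strict condition genuinely requires these to vanish identically, we expect the paper's intended meaning to be the bidegree decomposition $\Theta^B=\hat\phi_1^B+\hat\mu_1^B+\hat\mu_2^B$ analogous to \eqref{strict-quasi}. In that case the only obstruction to $\g_2$ being a subalgebra is the $\g^{0,3}\to\g_1$ component computed in the first step, and the equivalence follows from that step. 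The concluding argument would then mirror the proof of Proposition \ref{strictquasi}.
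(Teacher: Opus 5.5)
Your first step is correct. It is essentially what the paper's ``comparing terms'' amounts to. Since $\exp(\pm\hat B)=\Id\pm\hat B$, the $\Hom(\g^{0,3},\g_1)$-component of $\Theta^B$ at $(u,v,w)$ is exactly the left-hand side minus the right-hand side of the $\frkD$-map identity. Hence $B$ is a $\frkD$-map if and only if $\g_2$ is a subalgebra of $(\huaG,\Theta^B)$. This settles both directions when ``quasi-twilled'' is read in the non-strict sense.

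The gap is your second step, and it cannot be closed.

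\emph{The shape of $\hat\psi$ is wrong.} By \eqref{equia}, $\hat\psi$ is the part of the structure lying in $\Hom(\g^{2,1},\g_1)\oplus\Hom(\g^{1,2},\g_2)$. The pieces you name, $\Hom(\g^{1,2},\g_1)\oplus\Hom(\g^{0,3},\g_2)$, are the shape of $\hat\mu_2$. Asking the $\g^{0,3}\to\g_2$ part to vanish would kill the induced bracket on $\g_2$.

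\emph{The sources are misidentified.} $[\hat\mu_2,\hat B]_{\mathsf{LTS}}$ lies entirely in $\Hom(\g^{0,3},\g_1)$, so it belongs to the obstruction of your first step. The actual $\hat\psi$-type part of $\Theta^B$ is $[\hat\mu_1,\hat B]_{\mathsf{LTS}}+\frac12[[\hat\phi_1,\hat B]_{\mathsf{LTS}},\hat B]_{\mathsf{LTS}}$. Concretely, the $\g_1$-component of $\Theta^B((x,0),(y,0),(0,w))$ is
$$\Courant{x,y,B(w)}_1-B\Courant{x,y,w}_2-B\Courant{x,y,B(w)}_2=-\delta(x,y)w .$$
This is the very expression the paper later uses for the differential $C^0(B)\to C^1(B)$, and nothing in the $\frkD$-map identity forces it to vanish.

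\emph{A counterexample.} Let $\g=\mathfrak{sl}_2$ with $\Courant{x,y,z}=[[x,y],z]$, take $(\g\ltimes_{\frkR}\g,\g,\g)$, and set $R(f)=e$, $R(e)=R(h)=0$.
\begin{itemize}
\item $R$ is a $\frkD$-map: both sides of the identity in Example \ref{semidirecttt} vanish. The left side is zero because $[e,e]=0$. The argument of $R$ on the right lies in $\mathrm{ad}_e^2(\g)=\mathbb{K}e\subseteq\ker R$.
\item Yet the component above at $(e,f,f)$ equals $[h,R(f)]-R([h,f])=2e+2e=4e\neq 0$.
\end{itemize}
So ``$\frkD$-map $\Rightarrow$ strict'' is false. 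Your fallback, reinterpreting the paper's decomposition, changes the claim rather than proving it.

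What your computation actually establishes is the correct version: $B$ is a $\frkD$-map if and only if $((\huaG,\Theta^B),\g_1,\g_2)$ is quasi-twilled, with a $\hat\psi$-component that is generally nonzero. The paper's proof has the same lacuna. Its displayed formula for $\Theta^B$, and the rearrangement into $\hat\phi_1^B+\hat\mu_1^B+\hat\mu_2^B$ that follows, simply omit $[\hat\mu_1,\hat B]_{\mathsf{LTS}}+\frac12[[\hat\phi_1,\hat B]_{\mathsf{LTS}},\hat B]_{\mathsf{LTS}}$.
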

\begin{proof}
Write $\Theta=\hat\phi_1+\hat\mu_1+\hat\mu_2$ and $\Theta^B=\hat\phi_1^B+\hat\mu_1^B+\hat\mu_2^B$. Then $\Theta^B$ has the form as follows:
{\footnotesize
\begin{eqnarray*}
\hat\phi_1^B((x,u),(y,v),(z,w))&=&\bigg(-B\Big(\Courant{x+B(u),y+B(v),z+B(w)}_2\Big),\Courant{x+B(u),y+B(v),z+B(w)}_2\bigg),\\
\hat\mu_1^B((x,u),(y,v),(z,w))&=&\bigg(\Courant{x+B(u),y+B(v),z+B(w)}_1-B\Big(\Courant{x+B(u),y+B(v),w}_2+\Courant{u,y+B(v),z+B(w)}_2\\
~ &&-\Courant{v,x+B(u),z+B(w)}_2\Big),\Big(\Courant{x+B(u),y+B(v),w}_2+\Courant{u,y+B(v),z+B(w)}_2-\\
~ && \Courant{v,x+B(u),z+B(w)}_2\Big)\bigg)\\
\hat\mu_2^B((x,u),(y,v),(z,w))&=&\bigg(\Courant{u,v,z+B(w)}_1+\Courant{x+B(u),v,w}_1-\Courant{y+B(v),u,w}_1-B\Courant{u,v,w}_2,\Courant{u,v,w}_2\bigg).
\end{eqnarray*}}
The triple $((\huaG,\Theta^B),\g_1,\g_2)$ is a strict quasi-twilled Lie triple system is equivalent to
$$\Theta^B=\hat\phi_1+\hat\mu_1+\hat\mu_2+[\hat\phi_1,\hat B]_{\mathsf{LTS}}+\half[[\hat\mu_1,\hat B]_{\mathsf{LTS}},\hat B]_{\mathsf{LTS}}+\frac{1}{6}[[[\hat\phi_1,\hat B]_{\mathsf{LTS}},\hat B]_{\mathsf{LTS}},\hat B]_{\mathsf{LTS}}.$$
Comparing terms leads to that $B$ is a $\frkD$-map. This finishes the proof.
\end{proof}

Let the twisting  $((\huaG,\Theta^B),\g_1,\g_2)$ be a strict quasi-twilled Lie triple systems as in Proposition \ref{twilled}, and write its structure $\Theta^B=\hat\phi_1^B+\hat\mu_1^B+\hat\mu_2^B$. Rearranging terms of $\Theta^B$ as follows:
\begin{eqnarray*}
\left\{\begin{array}{rcl}
\hat\phi_1^B&=&\hat\phi_1,\\
\hat\mu_1^B&=&\hat\mu_1+[\hat\phi_1,\hat B]_{\mathsf{LTS}},\\
\hat\mu_2^B&=&\hat\mu_2+\frac{1}{2}[[\hat\mu_1,\hat B]_{\mathsf{LTS}},\hat B]_{\mathsf{LTS}}+\frac{1}{6}[[[\hat\phi_1,\hat B]_{\mathsf{LTS}},\hat B]_{\mathsf{LTS}},\hat B]_{\mathsf{LTS}},
\end{array}\right.
\end{eqnarray*}

\emptycomment{
\subsection{Maurer-Cartan characterization of deformation maps of type II}
In this subsection, we construct an $L_\infty$-algebra, which controls deformations of $\frkD$-map of a quasi-twilled Lie triple system. Similarly, characterization of deformation maps of type II unifies that of relative Rota-Baxter operators of any weight, relative Rota-Baxter operators, twisted Rota-Baxter operators and also deformation maps of matched pairs of Lie triple systems.

\begin{thm}\label{controlling}
Let $(\huaG,\g_1,\g_2)$ be a quasi-twilled Lie triple system. Then $(C^*(\huaG,\huaG),l_1,l_3,l_4)$ is an $L_\infty$-algebra, where $l_1,~l_3$ and $l_4$ are given as follows:
\begin{eqnarray*}
l_1(f_1)&=&[\hat\mu_2,\hat f_2]_{\mathsf{LTS}},\\
l_3(f_1,f_2,f_3)&=&[[[\hat\mu_1,\hat f_1]_{\mathsf{LTS}},\hat f_2]_{\mathsf{LTS}},\hat f_3]_{\mathsf{LTS}},\\
l_4(f_1,f_2,f_3,f_4)&=&[[[[\hat\phi_1,\hat f_1]_{\mathsf{LTS}},\hat f_2]_{\mathsf{LTS}},\hat f_3]_{\mathsf{LTS}},\hat f_4]_{\mathsf{LTS}}.
\end{eqnarray*}
Furthermore, a linear map $B:\g_2\longrightarrow\g_1$ is a $\frkD$-map if and only if $B$ is a Maurer-Cartan element of the $L_\infty$-algebra $(C^*(\huaG,\huaG),l_1,l_3,l_4)$.
\end{thm}
\begin{proof}
Let $(\huaG,\g_1,\g_2)$ be a strict quasi-twilled Lie triple system. Then we obtain a $V$-data $(L,\h,P,\Delta)$ as follows:
\begin{itemize}
\item a graded Lie algebra is $(C^*(\huaG,\huaG),[\cdot,\cdot]_{\mathsf{LTS}})$;
\item an abelian subalgebra is $(C^*(\g_2,\g_1),[\cdot,\cdot]_{\mathsf{LTS}})$;
\item a projection is $P:C^*(\huaG,\huaG)\longrightarrow C^*(\g_2,\g_1)$;
\item an element $\Delta=\Theta\in \ker(P)^1$ satisfies $[\Delta,\Delta]_{\mathsf{LTS}}=0$.
\end{itemize}
Then by Proposition \ref{curved}, we obtain an $L_\infty$-algebra $(C^*(\g_2,\g_1),\{l_k\}_{k=0}^\infty)$, where $l_k$'s are given by
$$l_k(f_1,\cdots,f_k)=P\Big([\cdots[[\Delta,\hat f_1]_{\mathsf{LTS}},\hat f_2]_{\mathsf{LTS}},\cdots,\hat f_n]_{\mathsf{LTS}}\Big).$$
For $f_1\in C^p(\g_2,\g_1),~f_2\in C^q(\g_2,\g_1),~f_3\in C^r(\g_2,\g_1)$ and $f_4\in C^s(\g_2,\g_1)$, we have
\begin{eqnarray*}
l_1(f_1)&=&P[\Theta,\hat f_1]_{\mathsf{LTS}}=[\hat\mu_2,\hat f_1]_{\mathsf{LTS}},\\
l_2(f_1,f_2)&=&P[[\Theta,\hat f_1]_{\mathsf{LTS}},\hat f_2]_{\mathsf{LTS}}=0,\\
l_3(f_1,f_2,f_3)&=&P[[[\Theta,\hat f_1]_{\mathsf{LTS}},\hat f_2]_{\mathsf{LTS}},\hat f_3]_{\mathsf{LTS}}=[[[\hat\mu_1,\hat f_1]_{\mathsf{LTS}},\hat f_2]_{\mathsf{LTS}},\hat f_3]_{\mathsf{LTS}},\\
l_4(f_1,f_2,f_3,f_4)&=&P[[[[\Theta,\hat f_1]_{\mathsf{LTS}},\hat f_2]_{\mathsf{LTS}},\hat f_3]_{\mathsf{LTS}},\hat f_4]_{\mathsf{LTS}}=[[[[\hat\phi_1,\hat f_1]_{\mathsf{LTS}},\hat f_2]_{\mathsf{LTS}},\hat f_3]_{\mathsf{LTS}},\hat f_4]_{\mathsf{LTS}}.\\
\end{eqnarray*}
Since $(C^*(\g_2,\g_1),[\cdot,\cdot]_{\mathsf{LTS}})$ is abelian and $[[[[\hat\mu_2,\hat f_1]_{\mathsf{LTS}},\hat f_2]_{\mathsf{LTS}},\hat f_3]_{\mathsf{LTS}},\hat f_4]_{\mathsf{LTS}}\in C^{p+q+r+s}(\g_2,\g_1)$, we have $l_k=0$ for all $k\geq 5$.

Furthermore, it is straightforward to see that
\begin{eqnarray*}
~ &&l_1(B)(u,v,w)+\frac{1}{3}l_3(B,B,B)(u,v,w)+\frac{1}{24}l_4(B,B,B,B)(u,v,w)\\
~ &=&\Big([\hat\mu_2,\hat B]_{\mathsf{LTS}}+\frac{1}{6}[[[\hat\mu_1,\hat B]_{\mathsf{LTS}},\hat B]_{\mathsf{LTS}},\hat B]_{\mathsf{LTS}}+\frac{1}{24}[[[[\hat\phi_1,\hat B]_{\mathsf{LTS}},\hat B]_{\mathsf{LTS}},\hat B]_{\mathsf{LTS}},\hat B]_{\mathsf{LTS}}\Big)(u,v,w)\\
~ &=&\Courant{u,v,B(w)}_1+\Courant{B(u),v,w}_1-\Courant{B(v),u,w}_1-B\Courant{u,v,w}_2+\Courant{B(u),B(v),B(w)}_1\\
~ &&-\frac{1}{3}B\Big(\Courant{B(u),B(v),w}_2+\Courant{u,B(v),B(w)}_2-\Courant{v,B(u),B(w)}_2\Big)-B\Courant{B(u),B(v),B(w)}_2.
\end{eqnarray*}
Thus $B$ is a Maurer-Cartan element of the constructed $L_\infty$-algebra $(C^*(\huaG,\huaG),l_1,l_3,l_4)$ if and only if $B$ is a $\frkD$-map. This finishes the proof.
\end{proof}

\begin{rmk}
In particular, when the $\frkD$-maps are restricted to the cases in Example \ref{crosseddd}, Example \ref{semidirecttt}, Example \ref{twistedd}, and Example \ref{Reyy} on their corresponding quasi-twilled Lie triple systems, by Theorem \ref{controlling} we obtain the corresponding $L_\infty$-algebras, whose Maurer-Cartan elements are precisely relative Rota-Baxter operators of any weight, relative Rota-Baxter operators, twisted Rota-Baxter operators, Reynolds operators and deformation map of matched pairs of Lie triple systems respectively.
\end{rmk}

In the sequel, we construct a twisted $L_\infty$-algebra that controls deformations of $\frkD$-maps of a quasi-twilled Lie triple system $(\huaG,\g_1,\g_2)$.
Let $B:\g_2\longrightarrow\g_1$ be a $\frkD$-map, then by Theorem \ref{controlling}, $B$ is a Maurer-Cartan element of the $L_\infty$-algebra  $(C^*(\huaG,\huaG),l_1,l_3,l_4)$. Consequently, we obtain another $L_\infty$-algebra (called the twisted $L_\infty$-algebra) as follows:
\begin{eqnarray*}
l_1^B(f)&=&l_1(f)+\frac{1}{2}l_3(B,B,f)+\frac{1}{6}l_4(B,B,B,f),\\
l_2^B(f_1,f_2)&=&l_3(B,f_1,f_2)+\frac{1}{2}l_4(B,B,f_1,f_2),\\
l_3^B(f_1,f_2,f_3)&=&l_3(f_1,f_2,f_3)+l_4(B,f_1,f_2,f_3),\\
l_4^B(f_1,f_2,f_3,f_4)&=&l_4(f_1,f_2,f_3,f_4),\\
l_k^B&=&0,\quad k\geq 5.
\end{eqnarray*}

\begin{thm}\label{govern}
Let $B$ be a $\frkD$-map of a quasi-twilled Lie triple system $(\huaG,\g_1,\g_2)$ and $B':\g_2\longrightarrow\g_1$ a linear map. Then $B+B'$ is still a $\frkD$-map of $(\huaG,\g_1,\g_2)$ if and only if $B'$ is a Maurer-Cartan element of the twisted $L_\infty$-algebra $(C^*(\huaG,\huaG),l_1^B,l_2^B,l_3^B,l_4^B)$.
\end{thm}
\begin{proof}
The proof is similar to that of Theorem \ref{tttwsited}. We omit the details.
\end{proof}

\begin{rmk}
Applying Theorem \ref{govern} to Examples \ref{crosseddd}-\ref{Reyy}, one obtains the twisted $L_\infty$-algebras of relative Rota-Baxter operators of any weight, relative Rota-Baxter operators, twisted Rota-Baxter operators, Reynolds operators and deformation maps of matched pairs of Lie triple systems that controls deformations of the corresponding operators.
\end{rmk}
}

\subsection{Cohomology of deformation maps of type II}
In this subsection, we establish a cohomology theory of $\frkD$-maps of a quasi-twilled Lie triple systems by Yamaguti cohomology, and illustrate that it unifies that of relative Rota-Baxter operators (of any weight), twisted Rota-Baxter operatots and Reynolds operators, and deformation map of matched pair of Lie triple systems. Parallel to the case of $\huaD$-maps, we should construct the 0-cocycle of $\frkD$-maps first.

\begin{pro}\label{reps}
Let $B:\g_2\longrightarrow\g_1$ be a $\frkD$-map of a quasi-twilled Lie triple system $(\huaG,\g_1,\g_2)$. Then
$$\Courant{u,v,w}_B:=\Courant{u,v,w}_2+\Courant{B(u),B(v),B(w)}_2+\Courant{B(u),B(v),w}_2+\Courant{u,B(v),B(w)}_2-\Courant{v,B(u),B(w)}_2$$
is a Lie triple system structure on $\g_2$ and linear map $\varrho:\ot^2\g_2\longrightarrow\gl(\g_1)$ defined to be
$$\varrho(u,v)x:=\Courant{x,u,v}_1+\Courant{x,B(u),B(v)}_1-B\Courant{x,B(u),B(v)}_2-B\Big(\Courant{x,B(u),v}_2-\Courant{u,x,B(v)}_2\Big)$$
is a representation of Lie triple system $(\g_2,\Courant{\cdot,\cdot,\cdot}_B)$ on the vector space $\g_1$.
\end{pro}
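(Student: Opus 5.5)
We follow the route used for $\huaD$-maps in Proposition \ref{base}: instead of checking the Lie triple system axioms and the representation identities \eqref{RYT4}--\eqref{RLY5} by hand, we read them off from the twisted structure $\Theta^B$. By the lemma preceding Proposition \ref{twilled}, $\Theta^B=\exp(-\hat B)\circ\Theta\circ(\exp(\hat B)\ot\exp(\hat B)\ot\exp(\hat B))$ is a Lie triple system structure on $\huaG$. Since $B$ is a $\frkD$-map, Proposition \ref{twilled} says $((\huaG,\Theta^B),\g_1,\g_2)$ is a strict quasi-twilled Lie triple system, with decomposition $\Theta^B=\hat\phi_1^B+\hat\mu_1^B+\hat\mu_2^B$ as computed in its proof.

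The first step is to identify the components of $\Theta^B$. Because $\g_2$ is a subalgebra of $(\huaG,\Theta^B)$, the restriction of $\Theta^B$ to $\g_2\ot\g_2\ot\g_2$, projected to $\g_2$, is a Lie triple system bracket on $\g_2$. Concretely, $\exp(\hat B)(0,u)=(B(u),u)$, so $\Theta^B((0,u),(0,v),(0,w))=\exp(-\hat B)\Theta((B(u),u),(B(v),v),(B(w),w))$. Expanding $\Theta$ with \eqref{equi}, the $\g_2$-component is exactly $\Courant{u,v,w}_B$, because $\exp(-\hat B)$ does not change the $\g_2$-component. The $\g_1$-component equals the left side minus $B$ applied to the right side of the $\frkD$-map identity, so it vanishes; this confirms that $\g_2$ is closed. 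Hence $(\g_2,\Courant{\cdot,\cdot,\cdot}_B)$ is a Lie triple system.

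For the representation we use a general fact. If $(\huaG',\h_1,\h_2)$ is a strict quasi-twilled Lie triple system, then $\h_2$ acts on $\h_1$ through $\rho(u,v)x:=\mathrm{pr}_{\h_1}\Theta'(x,u,v)$. Indeed, the component $\hat\mu_2$ restricted to $\g_1\oplus\g_2$ with $\g_1$ regarded as an abelian ideal is a Lie triple system structure, the semidirect product $\g_2\ltimes\g_1$, and this follows from $[\hat\mu_2,\hat\mu_2]_{\mathsf{LTS}}=0$. That identity is the $\g^{0,\ast}$/$\g^{1,\ast}$-graded part of $[\Theta',\Theta']_{\mathsf{LTS}}=0$; the argument is the same as for Example \ref{defor}. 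Applying this to $\Theta^B$ requires computing $\mathrm{pr}_{\g_1}\Theta^B((x,0),(0,u),(0,v))$. Using $\exp(\hat B)(x,0)=(x,0)$ and applying $\exp(-\hat B)$, it equals
$\Courant{x,B(u),B(v)}_1+\Courant{x,u,v}_1-B(\Courant{x,B(u),v}_2-\Courant{u,x,B(v)}_2+\Courant{x,B(u),B(v)}_2)$.
Some care with the sign conventions of the $\Courant{\cdot,\cdot,\cdot}_2$ mixed terms in $\hat\mu_1$ is needed; these should use the skew-symmetry and \eqref{LY1}. The result matches $\varrho(u,v)x$. Therefore $\varrho$ is a representation of $(\g_2,\Courant{\cdot,\cdot,\cdot}_B)$ on $\g_1$.

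The main obstacle is justifying that the $\hat\mu_2^B$-part alone, rather than the full $\Theta^B$, satisfies the Lie triple system axioms. The $\hat\phi_1^B$ and $\hat\mu_1^B$ pieces could contaminate the identities \eqref{RYT4}--\eqref{RLY5}. We first try a degree count: each identity involves a single element of $\g_1$ and otherwise only elements of $\g_2$. Terms containing $\hat\phi_1^B$ require at least two $\g_1$-inputs, and $\hat\mu_1^B$ lands in $\g_1$ only from two $\g_1$-inputs. So in the $\g_1$-component with one $\g_1$-argument only $\hat\mu_2^B$ contributes, and the fundamental identity \eqref{fundamental} together with \eqref{LY1} for $\Theta^B$, evaluated on one $x$ and several $u$'s, yields \eqref{RYT4} and \eqref{RLY5}. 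If this bookkeeping fails, the fallback is a direct verification using the $\frkD$-map identity.
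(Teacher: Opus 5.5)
Your proposal is correct and is essentially the paper's argument: the paper's proof is only the remark that the statement follows from Proposition~\ref{twilled}, and you fill in what that remark leaves implicit, namely that closure of $\g_2$ under $\Theta^B$ is equivalent to the $\frkD$-map identity, and that the component $[\hat\mu_2^B,\hat\mu_2^B]_{\mathsf{LTS}}=0$ of $[\Theta^B,\Theta^B]_{\mathsf{LTS}}=0$ encodes the semidirect product of $(\g_2,\Courant{\cdot,\cdot,\cdot}_B)$ with $\g_1$ via $\varrho$. One caveat: $\Theta^B$ is in general not strict, since its $\g^{1,2}\to\g_2$ part $\Courant{x,B(u),B(v)}_2+\Courant{x,B(u),v}_2-\Courant{u,x,B(v)}_2$ (the very expression you apply $B$ to) need not vanish; however, your argument never uses strictness, because $[\hat\mu_2,\hat\mu_2]_{\mathsf{LTS}}=0$ is the lowest-degree part of $[\Theta',\Theta']_{\mathsf{LTS}}=0$ for any quasi-twilled $\Theta'$, and in your one-$\g_1$-input count such $\g_2$-valued outputs only enter brackets of three elements of $\g_2$, whose $\g_1$-projection is zero since $\g_2$ is a $\Theta^B$-subalgebra.
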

\begin{proof}
It follows from Proposition \ref{twilled} directly.
\end{proof}

\begin{pro}
Let $B:\g_2\longrightarrow\g_1$ be a $\frkD$-map of a quasi-twilled Lie triple system $(\huaG,\g_1,\g_2)$. Define a linear map $\delta:\wedge^2\g_1\longrightarrow\Hom(\g_2,\g_1)$ to be
$$\delta(x,y)v:=B\Courant{x,y,v}_2+B\Courant{x,y,B(v)}_2-\Courant{x,y,B(v)}_1,\quad \forall x,y\in \g_1,v\in \g_2.$$
Then $\delta(x,y)$ is a $1$-cocycle of the Lie triple system $(\g_2,\Courant{\cdot,\cdot,\cdot}_B)$ associated to the representation $(\g_1;\varrho)$.
\end{pro}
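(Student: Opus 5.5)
We must show $\de(\delta(x,y))=0$, where $\de$ is the Yamaguti coboundary of $(\g_2,\Courant{\cdot,\cdot,\cdot}_B)$ with coefficients in $(\g_1;\varrho)$. Since $\delta(x,y)\in C^1_{\mathsf{Ymg}}(\g_2;\g_1)=\Hom(\g_2,\g_1)$, this amounts to the identity, for all $v_1,v_2,v_3\in\g_2$,
\begin{eqnarray*}
0&=&-\varrho(v_2,v_3)\delta(x,y)v_1+\varrho(v_1,v_3)\delta(x,y)v_2+D_\varrho(v_1,v_2)\delta(x,y)v_3-\delta(x,y)\Courant{v_1,v_2,v_3}_B.
\end{eqnarray*}
As in Proposition \ref{1-cocy}, I would verify it conceptually through the twisting rather than term by term.

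By Proposition \ref{twilled}, $\Theta^B=\exp(-\hat B)\circ\Theta\circ(\exp(\hat B)^{\ot 3})$ is a strict quasi-twilled structure. Its $\g_2$-part is $\Courant{\cdot,\cdot,\cdot}_B$, and $\varrho$ is read off from the mixed components (Proposition \ref{reps}). First I would observe that
\[
\delta(x,y)v=-\mathrm{pr}_{\g_1}\Theta^B(x,y,v),
\]
the $\g_1$-component of the twisted bracket with two entries in $\g_1$ and one in $\g_2$, up to sign. Indeed, by \eqref{tw}, $\Theta^B(x,y,v)=\exp(-\hat B)\Theta(x,y,v+B(v))$. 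Its $\g_1$-part is
\[
\Courant{x,y,v}_1+\Courant{x,y,B(v)}_1-B\bigl(\Courant{x,y,v}_2+\Courant{x,y,B(v)}_2\bigr).
\]
Strictness gives $\Courant{x,y,v}_1=0$, so this is $-\delta(x,y)v$. Thus $\delta(x,y)$ is, up to sign, $\mathrm{pr}_{\g_1}\circ\frkL^{\Theta^B}_{x,y}|_{\g_2}$, the left multiplication operator of $(\huaG,\Theta^B)$ followed by projection.

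Next I would apply the fundamental identity \eqref{fundamental} of $\Theta^B$ to $(x,y,v_1,v_2,v_3)$:
\[
\frkL_{x,y}\Theta^B(v_1,v_2,v_3)=\Theta^B(\frkL_{x,y}v_1,v_2,v_3)+\Theta^B(v_1,\frkL_{x,y}v_2,v_3)+\Theta^B(v_1,v_2,\frkL_{x,y}v_3),
\]
and project it onto $\g_1$. Each $\frkL_{x,y}v_i$ splits into a $\g_1$-part $-\delta(x,y)v_i$ and a $\g_2$-part. For example, $\Theta^B(-\delta(x,y)v_1,v_2,v_3)$ has $\g_1$-projection $-\varrho(v_2,v_3)\delta(x,y)v_1$. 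The middle slot produces $\rho$-type terms, which combine through skew-symmetry and \eqref{LY1} into $\varrho(v_1,v_3)$ and $D_\varrho(v_1,v_2)$. The left-hand side gives $-\delta(x,y)\Courant{v_1,v_2,v_3}_B$.

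The main obstacle is the leftover terms. These are the $\g_2$-parts of $\frkL_{x,y}v_i$ inserted into the purely-$\g_2$ bracket, together with the $\g_1$-parts of $\Theta^B$ evaluated on $\g_1\otimes\g_1\otimes\g_2$-type arguments. One must show they cancel or drop out under $\mathrm{pr}_{\g_1}$. Here I would use strictness of $\Theta^B$, i.e. $\hat\psi^B=0$ and $\g_2$ a subalgebra, to kill the $\g_1$-projection of brackets of three $\g_2$ elements. The correct matching of signs with the coboundary formula must also be tracked. If the conceptual cancellation is not clean, I would fall back on expanding each term using the explicit formulas for $\varrho$ and $\Courant{\cdot,\cdot,\cdot}_B$, grouping by the number of occurrences of $B$, and using the $\frkD$-map identity to absorb the terms where $B$ is applied to a full bracket. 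This mirrors the four vanishing groups displayed in the proof of Proposition \ref{1-cocy}.
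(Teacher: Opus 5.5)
Your route differs from the paper's. The paper only asserts that $\varrho(v,w)\delta(x,y)u-\varrho(u,w)\delta(x,y)v+D_\varrho(u,v)\delta(x,y)w-\delta(x,y)\Courant{u,v,w}_B=0$ follows by expanding $\varrho$ and $\Courant{\cdot,\cdot,\cdot}_B$ and using the $\frkD$-map identity, and it leaves that computation to the reader. You instead write $\delta(x,y)=-\mathrm{pr}_{\g_1}\circ\frkL^{\Theta^B}_{x,y}|_{\g_2}$, which is correct, and project the fundamental identity of $\Theta^B$.

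That argument closes completely, and the ``main obstacle'' does not exist. The one fact you need is that $\g_2$ is a subalgebra of $(\huaG,\Theta^B)$. This is literally the $\frkD$-map condition, because $\mathrm{pr}_{\g_1}\Theta^B(u,v,w)$ is the difference of the two sides of the defining identity. Three consequences follow:
\begin{enumerate}
\item $\varrho(u,v)x=\mathrm{pr}_{\g_1}\Theta^B(x,u,v)$ is the quotient representation on $\huaG/\g_2\cong\g_1$.
\item The $\g_2$-parts of $\frkL_{x,y}v_i$ only produce elements of $\Theta^B(\g_2,\g_2,\g_2)\subset\g_2$, which $\mathrm{pr}_{\g_1}$ kills.
\item No $\g_1\otimes\g_1\otimes\g_2$-terms appear on the right. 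The only such term is the left-hand side $\Theta^B(x,y,\Courant{v_1,v_2,v_3}_B)$.
\end{enumerate}
Using skew-symmetry for the middle slot and \eqref{LY1} for the last slot, you get
\[
-\delta(x,y)\Courant{v_1,v_2,v_3}_B=-\varrho(v_2,v_3)\delta(x,y)v_1+\varrho(v_1,v_3)\delta(x,y)v_2-D_\varrho(v_1,v_2)\delta(x,y)v_3,
\]
which is exactly $\de(\delta(x,y))=0$. This is the general fact that a derivation of $L$, restricted to a subalgebra $\h$ and projected to $L/\h$, is a $1$-cocycle of $\h$ with values in $L/\h$. No fallback expansion is needed.

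Two points must be corrected.

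First, the target you display has the signs of its first two terms reversed; the slip is copied from the display in the proof of Proposition \ref{1-cocy}. By the Yamaguti coboundary with $n=1$, the cocycle condition is $\varrho(v_2,v_3)\delta v_1-\varrho(v_1,v_3)\delta v_2+D_\varrho(v_1,v_2)\delta v_3-\delta\Courant{v_1,v_2,v_3}_B=0$. That is exactly what your projection yields. Your displayed version differs from it by $2(\varrho(v_1,v_3)\delta v_2-\varrho(v_2,v_3)\delta v_1)$.

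Second, drop the claim $\hat\psi^B=0$: the twisted structure is not strict. Expanding $\exp(X_{\hat B})$ by bidegree gives $\hat\psi^B=[\hat\mu_1,\hat B]_{\mathsf{LTS}}+\frac{1}{2}[[\hat\phi_1,\hat B]_{\mathsf{LTS}},\hat B]_{\mathsf{LTS}}$. Its $\g^{2,1}\to\g_1$ component is $-\delta$ by your own computation, so strictness would force $\delta=0$. Its $\g^{1,2}\to\g_2$ component, $\Courant{x,Bu,Bv}_2+\Courant{x,Bu,v}_2+\Courant{x,u,Bv}_2$, is also nonzero in general. The ``(strict)'' in Proposition \ref{twilled} should be read only as ``$\g_2$ remains a subalgebra''.

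With these repairs your proof is complete, and it gives more than the paper's. It explains where $\delta$ comes from: it is the inner derivation $\frkL_{x,y}$ of the twisted structure. It needs no expansion of $\varrho$ or $\Courant{\cdot,\cdot,\cdot}_B$, and it works verbatim for $\frkL_{X,Y}$ with arbitrary $X,Y\in\huaG$. The paper's route is elementary, but it rests on a brute-force verification that is never shown.
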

\begin{proof}
Similar to proof of Proposition \ref{1-cocy}, it is only needed to show that for all $x,y\in \g_1,u,v,w\in \g_2$,
\begin{eqnarray*}
~ &&\de(\delta(x,y))(u,v,w)\\
~ &=&\varrho(v,w)\delta(x,y)u-\varrho(u,w)\delta(x,y)v+D_\varrho(u,v)\delta(x,y)w-\delta(x,y)\Courant{u,v,w}_B\\
~ &=&0.
\end{eqnarray*}
By using the fact that $B$ is a $\frkD$-map and expanding the expressions of $\varrho$ and $\Courant{\cdot,\cdot,\cdot}_B$ given in Proposition \ref{reps}, we obtain the conclusion directly.
\end{proof}

Now we are ready to define the cohomology of $\frkD$-map. Let $B:\g_2\longrightarrow\g_1$ be a $\frkD$-map of a quasi-twilled Lie triple system $(\huaG,\g_1,\g_2)$. Define the space of $n$-cochains $C^n(B)$ to be
\begin{eqnarray*}
C^n(B):=
\begin{cases}
\wedge^2\g_1, & n=0,\\
C^{n}_{\mathsf{Ymg}}(\g_2;\g_1) & n\geq 1.
\end{cases}
\end{eqnarray*}

The coboundary map $\delta_B:C^n(B)\longrightarrow C^{n+1}(B)$ is defined to be
\begin{eqnarray*}
\delta_B:=
\begin{cases}
\delta,\quad n=0,\\
\de, \quad n\geq 1.
\end{cases}
\end{eqnarray*}
If $n\geq 1$, the corresponding cochain complex $(\oplus_{n=1}^{+\infty}C^n_{\mathsf{Ymg}}(\g_2;\g_1),\de)$ is the Lie triple system $(\g_2,\Courant{\cdot,\cdot,\cdot}_B)$ associated with the representation $(\g_1;\varrho)$ constructed in Proposition \ref{reps}.

Let $(\huaG,\g_1,\g_2)$ be a quasi-twilled Lie triple system and $B:\g_2\longrightarrow\g_1$ a $\frkD$-map of $(\huaG,\g_1,\g_2)$. Then we obtain the cochain complex $(C^*(B):=\oplus_{n=0}^{+\infty}C^n(B),\delta_B)$, whose cohomology is defined to be the {\bf cohomology of the $\frkD$-map $B$}. Denote by $Z^n(B)$ the set of $n$-cocycles and denote by $B^n(B)$ the set of $n$-coboundaries. The $n$-th cohomology group is taken to be
$$H^n(B):=Z^n(B)/B^n(B),\quad n\geq 0.$$

\emptycomment{
In the sequel, we given an intrinsic interpretation of the above coboundary map.
\begin{thm}
Let $B:\g_2\longrightarrow\g_1$ be a $\frkD$-map of a quasi twilled Lie triple system $(\huaG,\g_1,\g_2)$. Then for any $f\in C^n(\g_2,\g_1)$, we have
$$l_1^B(f)=(-1)^{k-1}\delta_B(f),$$
where $l_1^B$ is the differential of the twisted $L_\infty$-algebra $(C^*(\g_2,\g_1),l_1^B,l_2^B,l_3^B,l_4^B)$.
\end{thm}
\begin{proof}
for any $f\in C^n(\g_2,\g_1)$, we have
\begin{eqnarray*}
l_1^B(f)&=&l_1(f)+\frac{1}{2}l_3(B,B,f)+\frac{1}{6}l_4(B,B,B,f)\\
~ &=&[\hat\mu_2,\hat f]_{\mathsf{LTS}}+\frac{1}{2}[[[\hat\mu_1,\hat B]_{\mathsf{LTS}},\hat B]_{\mathsf{LTS}},\hat f]_{\mathsf{LTS}}+\frac{1}{6}[[[[\hat\phi_1,\hat B]_{\mathsf{LTS}},\hat B]_{\mathsf{LTS}},\hat B]_{\mathsf{LTS}},\hat f]_{\mathsf{LTS}}\\
~ &=&[\hat\mu_2+\frac{1}{2}[[\hat\mu_1, \hat B]_{\mathsf{LTS}},\hat B]_{\mathsf{LTS}}+\frac{1}{6}[[[\hat\phi_1,\hat B]_{\mathsf{LTS}},\hat B]_{\mathsf{LTS}},\hat B]_{\mathsf{LTS}},\hat f]_{\mathsf{LTS}}\\
~ &=&[\hat\mu_2^B,\hat f]_{\mathsf{LTS}}\\
~ &=&(-1)^{k-1}\delta_B(f).
\end{eqnarray*}
This finishes the proof.
\end{proof}
}

Cohomology of deformation maps of type II recovers cohomologies of relative Rota-Baxter operators (of any weight), twisted Rota-Baxter operators and Reynolds operators on Lie triple systems, and deformation maps of matched pairs of Lie triple systems when the corresponding deformation maps of type II are restricted to Examples \ref{crosseddd}-\ref{deforr} respectively.

\begin{ex}
Consider the quasi-twilled Lie triple system $(\g\ltimes_\rho\h,\g,\h)$ given in Example \ref{crossedd}. Let $T:\h\longrightarrow\g$ be a relative Rota-Baxter operator of weight $\lambda$ of $\g$ with respect to the action $\rho$. Then $(\h,\Courant{\cdot,\cdot,\cdot}_T)$ is a Lie triple system, where the structure is given to be
$$\Courant{u,v,w}_T:=\lambda\Courant{u,v,w}_\h+D_\rho(Tu,Tv)w+\rho(Tv,Tw)u-\rho(Tu,Tw)v,\quad \forall u,v,w\in \h.$$
Moreover, the Lie triple system $(\h,\Courant{\cdot,\cdot,\cdot}_T)$ represents on the vector space $\g$ via
$$\varrho(u,v)x:=\Courant{x,Tu,Tv}_\g-T\Big(D_\rho(x,Tu)v-\rho(x,Tv)u\Big),\quad \forall x\in \g,~u,v\in \h$$

Define $\delta:\wedge^2\g\longrightarrow\Hom(\h,\g)$ to be
$$\delta(x,y)v=T\Courant{x,y,v}_T+T\Courant{x,y,Tv}_T-\Courant{x,y,Tv}_\g,\quad \forall x,y\in \g,v\in \h.$$
Then the cohomology of $T$ is $(\oplus_{n=0}^{+\infty}C^n(T),\delta_T)$, where
$$C^n(T)=
\begin{cases}
\wedge^2\g, & n=0,\\
C^n_{\mathsf{Ymg}}(\h;\g), & n\geq 1,
\end{cases}$$
and
$$\delta_T=
\begin{cases}
\delta, & n=0,\\
\de,& n\geq1.
\end{cases}$$
\end{ex}

\begin{ex}
Consider the quasi-twilled Lie triple system $(\g\ltimes_\rho V,\g,V)$ given in Example \ref{semidirectt}. Let $T:V\longrightarrow\g$ be a relative Rota-Baxter operator on $\g$ with respect to a representation $(V;\rho)$. Then $(\h,\Courant{\cdot,\cdot,\cdot}_T)$ is a Lie triple system, where
$$\Courant{u,v,w}_T:=D_\rho(Tu,Tv)w+\rho(Tv,Tw)u-\rho(Tu,Tw)v,\quad \forall u,v,w\in V.$$
Moreover, the Lie triple system $(\h,\Courant{\cdot,\cdot,\cdot}_T)$ represents on the vector space $\g$ via
$$\varrho(u,v)x:=\Courant{x,Tu,Tv}_\g-T\Big(D_\rho(x,Tu)v-\rho(x,Tv)u\Big),\quad \forall x\in \g,~u,v\in V.$$

Define $\delta:\wedge^2\g\longrightarrow\Hom(\h,\g)$ to be
$$\delta(x,y)v=T\Courant{x,y,v}_T+T\Courant{x,y,Tv}_T-\Courant{x,y,Tv}_\g,\quad \forall x,y\in \g,v\in \h.$$
Then the cohomology of $T$ is $(\oplus_{n=0}^{+\infty}C^n(T),\delta_T)$, where
$$C^n(T)=
\begin{cases}
\wedge^2\g, & n=0,\\
C^n_{\mathsf{Ymg}}(\h;\g), & n\geq 1,
\end{cases}$$
and
$$\delta_T=
\begin{cases}
\delta, & n=0,\\
\de,& n\geq1.
\end{cases}$$
\end{ex}

\begin{ex}
Consider the quasi-twilled Lie triple system $(\g\ltimes_{\rho,\omega}V,\g,V)$ given Example \ref{twisted}. Let $T:V\longrightarrow\g$ be a twisted Rota-Baxter operator. Then $(\h,\Courant{\cdot,\cdot,\cdot}_T)$ is a Lie triple system, where
$$\Courant{u,v,w}_T:=D_{\rho}(Tu,Tv)w+\rho(Tv,Tw)u-\rho(Tu,Tw)v+\omega(Tu,Tv),\quad \forall u,v,w\in \h.$$
Moreover, the Lie triple system $(\h,\Courant{\cdot,\cdot,\cdot}_T)$ represents on the vector space $\g$ via
$$\varrho(u,v)x:=\Courant{x,Tu,Tv}_\g-T\Big(D_\rho(x,Tu)v-\rho(x,Tv)u-\omega(x,Tu,Tv)\Big),\quad \forall x\in\g, ~u,v\in \h.$$

Define $\delta:\wedge^2\g\longrightarrow\Hom(V,\g)$ to be
$$\delta(x,y)v=T\Courant{x,y,v}_T+T\Courant{x,y,Tv}_T-\Courant{x,y,Tv}_\g,\quad \forall x,y\in \g,v\in \h.$$
Then the cohomology of $T$ is $(\oplus_{n=0}^{+\infty}C^n(T),\delta_T)$, where
$$C^n(T)=
\begin{cases}
\wedge^2\g, & n=0,\\
C^n_{\mathsf{Ymg}}(V;\g), & n\geq 1,
\end{cases}$$
and
$$\delta_T=
\begin{cases}
\delta, & n=0,\\
\de,& n\geq1.
\end{cases}$$
\end{ex}

\begin{ex}
Consider the quasi-twilled Lie triple system $(\g\ltimes_{\frkR,\Courant{\cdot,\cdot,\cdot}}\g,\g,\g)$ given in Example \ref{Rey}. Let $R:\g\longrightarrow\g$ be a Reynolds operator. Then $(\g,\Courant{\cdot,\cdot,\cdot}_R)$ is a Lie triple system, where
$$\Courant{x,y,z}_R:=\Courant{Rx,Ry,z}+\Courant{x,Ry,Rz}-\Courant{y,Rx,Rz}+\Courant{Rx,Ry,Rz},\quad \forall x,,y,z\in \g.$$
The Lie triple system $(\g,\Courant{\cdot,\cdot,\cdot}_R)$ represents on the vector space $\g$ via
$$\varrho(x,y)z:=\Courant{x,Ry,Rz}-R\Big(\Courant{x,Ry,Rz}-\Courant{x,By,z}+\Courant{y,x,Bz},\quad \forall x,y,z\in \g.$$

Define $\delta:\wedge^2\g\longrightarrow\Hom(\g,\g)$ to be
$$\delta(x,y)v=R\Courant{x,y,z}_R+R\Courant{x,y,Rz}_R-\Courant{x,y,Rz}_\g,\quad \forall x,y,z\in \g.$$
Then the cohomology of $R$ is $(\oplus_{n=0}^{+\infty}C^n(R),\delta_R)$, where
$$C^n(R)=
\begin{cases}
\wedge^2\g, & n=0,\\
C^n_{\mathsf{Ymg}}(\g;\g), & n\geq 1,
\end{cases}$$
and
$$\delta_R=
\begin{cases}
\delta, & n=0,\\
\de,& n\geq1.
\end{cases}$$
\end{ex}

\begin{ex}
Let $(\g_1,\g_2;\rho_1,\rho_2)$ be a matched pair of Lie triple systems. Consider the quasi-twilled Lie triple system $(\g_1\bowtie\g_2,\g_1,\g_2)$ given in Example \ref{defor}. Let $B:\g_2\longrightarrow\g_1$ be a deformation map of $(\g_1,\g_2;\rho_1,\rho_2)$. Then $(\g_2,\Courant{\cdot,\cdot,\cdot}_B)$ is a Lie triple system, where
$$\Courant{u,v,w}_B:=\Courant{u,v,w}_{\g_2}+D_1(B(u),B(v))w+\rho_1(B(v),B(w))u-\rho_1(B(u),B(w))v,\quad \forall u,v,w\in \g_2.$$
The Lie triple system $(\g_2,\Courant{\cdot,\cdot,\cdot}_B)$ represents on the vector space $\g_1$ via for any $x\in \g_1,u,v\in \g_2,$
$$\varrho(u,v)x:=\rho_2(u,v)x+\Courant{x,B(u),B(v)}_{\g_1}-B\Courant{x,B(u),B(v)}_{\g_2}-B\Big(D_1(x,B(u))v-\rho_1(x,B(v))u\Big).$$

Define $\delta\wedge^2\g_1\longrightarrow\Hom(\g_2,\g_1)$ to be for any $x,y\in \g_1,v\in \g_2$,
$$\delta(x,y)v:=B\Big(D_1(x,y)v\Big)+B\Courant{x,y,B(v)}_{\g_2}-\Courant{x,y,B(v)}_{\g_1}.$$
Then the cohomology of deformation map $B$ is $(\oplus_{n=0}^{+\infty}C^n(B),\delta_B)$, where
\begin{eqnarray*}
C^n(B):=
\begin{cases}
\wedge^2\g_1, & n=0,\\
C^{n}_{\mathsf{Ymg}}(\g_2;\g_1) & n\geq 1,
\end{cases}
\end{eqnarray*}
and
\begin{eqnarray*}
\delta_B:=
\begin{cases}
\delta,\quad n=0,\\
\de, \quad n\geq 1.
\end{cases}
\end{eqnarray*}
\end{ex}

\begin{rmk}
As we all can see in the present paper, two types of deformation maps of quasi-twilled Lie triple systems recover several kinds of operators on Lie triple systems. We construct the 0-cochains and establish the cohomology of two types of deformation maps of quasi-twilled Lie triple systems, one thus can consequently use the cohomologies to classify the infinitesimal deformations of corresponding operators which we did not examined before. Moreover, we also consider to construct $L_\infty$-algebras whose Maurer-Cartan elements are precisely these two kinds of deformation maps. Consequently, by using the Maurer-Cartan elements, we obtain $L_\infty$-algebras that govern deformations of two types of deformation maps. We will consider this projection in the future and we also expect new studies in this direction.
\end{rmk}

{\bf Acknowledgment:}

This research is supported by NSFC grant 12401033 and China Scholarship Council.

\end{document}